\author{Mohammad Salman$^{a,*}$, Ruchi Das$^a$}
\title{Dynamics of multi-sensitive non-autonomous systems with respect to a vector}
\theoremstyle{definition}
\newtheorem{defn}{Definition}[section]
\providecommand{\keywords}[1]{\textbf{Keywords :} #1}
\providecommand{\msc}[1]{\textbf{Mathematics Subject Classification(2020)} #1}
\theoremstyle{plain}
\newtheorem{thm}{Theorem}[section]
\newtheorem{cor}{Corollary}[section]
\newtheorem{lem}{Lemma}[section]
\theoremstyle{definition}
\newtheorem{exm}{Example}[section]
\newtheorem{rmk}{Remark}[section]
\newcommand{\N}{\mathbb{N}}
\newcommand{\f}{f_{1,\infty}}
\newcommand{\M}{\mathcal{M}(X)}
\newcommand{\W}{\widetilde{f}_{1,\infty}}
\begin{document}
\date{}
\maketitle

\begin{abstract} We introduce the concept of multi-sensitivity with respect to a vector for a non-autonomous discrete system. We prove that for a periodic non-autonomous system on the closed unit interval, sensitivity is equivalent to strong multi-sensitivity and justify that the result need not be  true if the system is not periodic. In addition, we study strong multi-sensitivity and $\mathcal{N}$-sensitivity on non-autonomous systems induced by probability measure spaces.  Moreover,  we first prove that if $f_n$ converges to $f$ uniformly, then strong multi-sensitivity  (respectively, $\mathcal{N}$-sensitivity) of the non-autonomous system does not coincide with that of $(X, f)$. Then we give a sufficient condition such that non-autonomous system  $\{f_n\}_{n=1}^\infty = \f$  is strongly multi-sensitive (respectively, $\mathcal{N}$-sensitive) if and only if $f$ is so. Finally, we prove that if a non-autonomous system converges uniformly, then multi-transitivity and dense periodicity imply  $\mathcal{N}$-sensitivity.
\end{abstract}

\keywords{Non-autonomous dynamical system;  strong multi-sensitivity; $\mathcal{N}$-sensitivity; multi-transitivity}

\msc{37B20; 37B55; 54B10; 54H20}
\bigskip\renewcommand{\thefootnote}{\fnsymbol{footnote}}
\footnotetext{\hspace*{-5mm}
\renewcommand{\arraystretch}{1}
\begin{tabular}{@{}r@{}p{13cm}@{}}
$^*$& the corresponding author. \textit{Email addresses}: salman25july@gmail.com (M. Salman), rdasmsu@gmail.com (R. Das)\\
$^a$&Department of Mathematics, University of Delhi, Delhi-110007, India
\end{tabular}}
\vspace{-2mm}
\section{Introduction}
 Sensitive dependence on initial conditions (sensitivity), originated from the works of Ruelle and Takens \cite{RT1971}, describes the unpredictability in chaotic dynamical systems and is essence for different forms of chaos.   Auslander and Yorke \cite{AY1980}, later applied this idea of sensitivity to topological dynamical systems  and then Devaney popularized sensitivity by introducing Devaney chaos \cite{D1989}. Since then sensitivity has been a central topic when we talk about chaoticity and has attracted many researchers. In order to study unpredictability in a dynamical system on a larger scale, Moothathu \cite{M2007}, initiated the study of stronger forms of sensitivity in terms of subsets of $\mathbb{Z}^+$. He introduced the notions of cofinite sensitivity, multi-sensitivity, syndetic sensitivity and obtained many interesting results in terms of transitivity and minimality and proved that sensitivity implies cofinite sensitivity on the unit interval $[0,1]$.  Ever since the study of stronger forms of sensitivity, in particular multi-sensitivity has attracted the attention of many researchers. Wu et al. \cite{WWC2015} studied multi-sensitivity for product and hyperspatial dynamical systems. Later, in 2016, Huang et al. \cite{HKKZ2016} studied multi-sensitivity in terms of Lyapunov numbers and proved that if a dynamical system is multi-sensitive, then it has positive topological sequence entropy and proved that  for $M$-systems,  thickly syndetic sensitivity, thick sensitivity and multi-sensitivity are equivalent. Moothathu defined another interesting notion of multi-transitivity for dynamical systems \cite{M2010} which was later studied by Kwietniak and Oprocha and they proved that there is no relation between weakly mixing and multi-transitivity \cite{KO2012}. Later, in \cite{CLL2014}, Chen et al. introduced the concept of multi-transitivity with respect to a vector and gave a characterization for multi-transitivity. In 2016, Wu studied multi-transitivity on induced dynamical systems on the space of probability measures \cite{W2016}. Motivated by the works of Moothathu \cite{M2007, M2010} and Chen et al. \cite{CLL2014}, Jiao et al., introduced the notions of multi-sensitivity with respect to a vector, $\mathcal{N}$-sensitivity and strong multi-sensitivity for further understanding of multi-sensitivity and sensitivity \cite{JWLL2018}. They proved that if a dynamical system is multi-transitive with dense set of periodic points, then it is $\mathcal{N}$-sensitive.
 
Non-autonomous discrete dynamical systems  are  more complex dynamical systems and they  generalize the concept of autonomous discrete dynamical systems in a natural way.  They were introduced by Kolyada and Snoha in 1996 \cite{KS1996}, who gave a detailed motivation to the study of such systems and, in particular, their entropy. Non-autonomous discrete systems are related to the theory of difference equations and, in general, they provide a more adequate framework for the study of natural phenomena that appear in biology, physics, engineering, etc. (\cite{MR2855457, MR3378602}). In non-autonomous dynamical systems, the composition of two elements of the orbit of a point need not to be an element of the orbit.  As a consequence, the techniques used in this context are, in general, different from those used for autonomous systems and make this discipline of added interest.  Over the last decade, the study of chaos, sensitivity and stronger versions of sensitivity on  non-autonomous discrete systems has gained a lot of attention of researchers (\cite{KJA22, LZW2019, SD2019, SD2020, MR3584171, MR3846218}). Tian and Chen \cite{TC2006} introduced the notion of sensitivity in non-autonomous discrete dynamical systems. In 2015, Huang et al. \cite{HSZ2015} introduced the notion of cofinite sensitivity for non-autonomous systems and obtained several sufficient conditions for sensitivity and cofinite sensitivity in non-autonomous systems.  Recently, Salman and Das obtained a sufficient condition for non-autonomous $M$-systems under which multi-sensitivity, thick sensitivity and thickly syndetic sensitivity are equivalent \cite{SD2020c}.

The paper is organized as follows. In Section \ref{S-2}, we give prerequisites required for remaining sections of the paper. In Section \ref{S-3}, the notions of multi-sensitivity with respect to a vector, $\mathcal{N}$-sensitivity and strong multi-sensitivity are introduced  and studied. We first give an example showing that multi-sensitivity need not imply strong multi-sensitivity and hence need not imply  $\mathcal{N}$-sensitivity. Then we prove that for a general non-autonomous system on $[0,1]$, sensitivity  need not be not equivalent to $\mathcal{N}$-sensitivity  and the result is true for a periodic non-autonomous system on $[0,1]$.  Moreover, we prove that $\f\times g_{1,\infty}$ is strongly multi-sensitive (respectively, $\mathcal{N}$-sensitive) if and only if $\f$ or $g_{1,\infty}$ is strongly multi-sensitive (respectively, $\mathcal{N}$-sensitive). We also study these notions on a non-autonomous system, $(X, \W)$, induced by probability measure spaces and prove that if $(X, \W)$ is strongly multi-sensitive (respectively, $\mathcal{N}$-sensitive), then so is $(X, \f)$ and for the converse if  $(X, \f)$ is strongly multi-sensitive, then  $(X, \W)$ is $\mathcal{N}$-sensitive.  In Section \ref{S-4}, we first give examples showing that if $f_n$ converges to $f$ uniformly, then strong multi-sensitivity  or $\mathcal{N}$-sensitivity of $(X, \f)$ cannot be determined by that of the induced autonomous system $(X, f)$. In addition, we give a sufficient condition under which strong multi-sensitivity  (respectively, $\mathcal{N}$-sensitivity) of $(X, \f)$ coincides with that of $(X, f)$. Finally, we prove that for   a uniformly convergent non-autonomous system, if $(X, \f)$ is multi-transitive and has dense set of periodic points, then it is $\mathcal{N}$-sensitive.
\section{Preliminaries}\label{S-2}
 Let $\mathbb{N} =\{1,2,3, \ldots\}$ and $\mathbb{Z}^+=\{0,1,2, \ldots\}$. If $(X, d)$ is a metric space then  $B_d(x, \epsilon)$ denotes  the open ball with center $x\in X$ and radius $\epsilon$.
For each $n\in\N$, let $f_n: X\to X$ be a sequence of continuous functions on a compact metric space $X$. For convenience, denote $f_{1,\infty} := \{f_n\}_{n=1}^{\infty}$, and for all  $i$, $n$ and $k\in\N$, $$f^{n}_i:= f_{n+i-1}\circ f_{n+i-2}\circ\cdots\circ f_{i}, \ f_i^0 := id, \text{ and } f_{1,\infty}^{[k]} := \{f_{k(n-1)+1}^k\}_{n=1}^{\infty},$$    where $f_{1,\infty}^{[k]}$ is the {\it $k$-th iterate} of $\f$ (see \cite{MR3019971}). The $orbit$ of a point $x\in X$ under $\f$  is given by, $$\mathcal{O}_{f_{1,\infty}}(x)=\{x, f_1(x), f_2\circ f_1(x), f_3\circ f_2\circ f_1(x), \ldots \},$$ which is the solution of the following non-autonomous difference equation:
 $$\begin{cases}
x_{n+1}  & =  f_n(x_n), \\
x_0 & = x.\end{cases}$$
In particular, note that an autonomous dynamical system $(X, f)$ is a special case of the above system if $f_n = f$, for every $n\in\N$.  A point $x\in X$ is  {\it periodic} if there exists some $N\in \N$
such that $f_1^{nN}(x)=x$, for every $n\in\N$.

  We say that $(X, f_{1,\infty})$ is $(i)$ $periodic$, if there exists a $k\in\mathbb{N}$ such that $f_{j+kl} = f_j$, for any $l\in\mathbb{N}$ and for any  $1\leq j\leq k$; $(ii)$  \textit{feeble open}, if $int(f_n(U))\ne\varnothing$, for any nonempty open subset $U$ of $X$ and for any $n\in\N$. 

For any $m\geq 2$, if $(\f)^m = \{\underbrace{f_n\times\cdots\times f_n}_{m-\text{times}}\}_{n\in\N}$, where $(\f)^m =\f\times\cdots\times\f$ ($m$-times), then $(X^m, (\f)^m)$ is a non-autonomous dynamical system, where $X^m = X\times\cdots\times X$ ($m$-times). If $(X_1, d_1)$ and $(X_2, d_2)$ are two metric spaces, then we consider the product metric $\hat{d}$ on $X_1\times X_2$, where 
$$ \hat{d}\left((x_1, x_2), (x_1', x_2')\right) = \sqrt{d_1^2(x_1, x_1') +d_2^2(x_2, x_2')}, \text{ for all }  (x_1, x_2), (x_1', x_2')\in X_1\times X_2.$$
 A  sequence $\{f_n^k\}_{k\in\N}$ \textit{converges collectively} to $\{f^k\}_{k\in\N}$ with respect to the  supremum metric $D$ on $\mathcal{C}(X)$,  if for every $\epsilon>0$ there exists an $N_0\in\N$ such that $D(f_N^k, f^k)<\epsilon$, for all $N\geq N_0$  (see \cite{MR3846218}). 
 
 Let $\mathcal{M}(X)$ be the set of all Borel probability measures on $(X, \mathcal{B}(X))$, where $\mathcal{B}(X)$ is the $\sigma$-algebra of Borel subsets of $X$,   with the \textit{Prohorov metric $\mathcal{D}$} defined as  
 $$\mathcal{D}(\nu_1, \nu_2) = \inf\left\lbrace\epsilon: \nu_1(B)\leq \nu_2(N(B, \epsilon))+\epsilon, \text{ for every } B\in\mathcal{B}(X)\right\rbrace, $$
 where $N(B, \epsilon) = \bigcup_{b \in B} B_d(b, \epsilon)$.  Then $\mathcal{D}$ induces the \textit{weak*-topology} on $\mathcal{M}(X)$ \cite{MR2488795}. Let $\delta_x\in \mathcal{M}(X)$ be the \textit{Dirac point measure}, given by $\delta_x(A) = 0$, if $x\notin A$ and $\delta_x(A)=1$, if $x\in A$. For any $k\in\N$, we denote $$\mathcal{M}_k(X) =\left\lbrace\frac{1}{k}\left(\sum_{j=1}^k\delta_{x_j}\right) : x_j\in X \ \text{(not necessarily distinct)}\right\rbrace  \text{ and } \mathcal{M}_{\infty}(X) = \bigcup_{k\in\mathbb{N}}\mathcal{M}_k(X).$$ Then from \cite{MR0370540}, we  know that $\mathcal{M}_{\infty}(X)$ is dense in $\mathcal{M}(X)$ and $\mathcal{M}_k(X)$ is closed in $\mathcal{M}(X)$, for any $k\in\N$.  For $(X, f_{1,\infty})$, we will take the induced non-autonomous system $(\mathcal{M}(X), \widetilde{f}_{1,\infty})$, where $\widetilde{f}_{1,\infty} = \{\widetilde{f}_{i}\}_{i=1}^\infty$, $\widetilde{f}_{i}: \mathcal{M}(X)\to \mathcal{M}(X)$ is a continuous function, for each $i\in\N$ such that $\widetilde{f}_{1}^n(\nu)(B) = \nu(f_1^{-n}(B))$,  with $\nu\in \mathcal{M}(X)$, $B\in\mathcal{B}(X)$ and $f_1^{-n} = (f_1^n)^{-1}$ (see \cite{SD2019}).

For a non-autonomous system $(X, \f)$, $V\subseteq X$ and $\delta>0$, we denote:
$$N_{\f}(V, \delta) :=\left\lbrace n\in\N: \text{ there exist } u, v\in V \text{ satisfying } d(f_1^n(u), f_1^n(v))>\delta\right\rbrace.$$ 
\begin{defn}(\cite{HSZ2015, SD2019}) A non-autonomous system $(X, \f)$ is said to be
\begin{enumerate}[(1)]
\item \textit{sensitive}, if there exists a $\delta>0$ such that $N_{\f}(V, \delta)\ne\varnothing$, for any nonempty open subset $V$ of $X$;
\item \textit{cofinitely sensitive}, if there exists a $\delta>0$ such that $N_{\f}(V, \delta)$ is cofinite, for any nonempty open subset $V$ of $X$;
\item \textit{multi-sensitive}, if there exists a $\delta>0$ such that $\bigcap_{i=1}^mN_{\f}(V_i, \delta)\ne\varnothing$, for any collection of nonempty open subsets $V_1$, \ldots, $V_m$ of $X$.
\end{enumerate}
\end{defn}

\begin{defn}(\cite{SD2019, TC2006}) A non-autonomous system $(X, f_{1,\infty})$ is \textit{topologically transitive}, if there exists an $n\in\mathbb{N}$ such that $f_1^n(U)\cap V\ne\varnothing$, for any pair of nonempty open sets $U,$ $V\subseteq X$. A non-autonomous system $(X, \f)$ is called \textit{weakly mixing} if $\f\times\f$ is topologically transitive.
\end{defn}
A non-autonomous system $(X, \f)$ is said to be {\it Devaney chaotic} if  it is topologically transitive on $X$, it has dense set of periodic points on $X$ and it is sensitive on $X$.

\begin{defn}(\cite{SD2020}) A non-autonomous system $(X, \f)$ is  \textit{multi-transitive}, if $(X^m,$ $ \f^{[1]}\times\cdots\times \f^{[m]})$ is topologically transitive for any $m\in\N$, i.e., for any collection of nonempty open subsets $U_1$, $U_2$, \ldots, $U_m$; $V_1$, $V_2$, \ldots, $V_m$ of $X,$ there exists a $k\in\N$ such that $f_1^{ik}(U_i)\cap V_i\ne\varnothing$, for any $i\in\{1,2, \ldots, m\}$.
\end{defn}

\section{Multi-sensitivity with respect to a vector}\label{S-3}
\begin{defn} A non-autonomous system $(X, \f)$, for a vector $\mathbf{v}=(v_1, v_2, \ldots, v_r)\in\N^r$, is said to be 
\begin{enumerate}[(1)]
\item \textit{multi-sensitive with respect to vector $\mathbf{v}$,} if for any nonempty open subsets $U_1$, $U_2$, \ldots, $U_r$ of $X$, we have $\bigcap_{i=1}^r N_{f_{1,\infty}^{[v_i]}}(U_i, \delta)\ne\varnothing$, for some $\delta>0$;
\item \textit{$\mathcal{N}$-sensitive,} if $(X, \f)$ is multi-sensitive with respect to vector $\mathbf{v} = (1, 2, \ldots, n)$, for any $n\in\N$;
\item \textit{strongly multi-sensitive,} if $(X, \f)$ is multi-sensitive with respect to any vector in $\N^r$, for any $r\in\N$.
\end{enumerate}
\end{defn} 
It is easy to see that for non-autonomous dynamical systems: \\ Cofinite sensitivity $\implies$ strong multi-sensitivity $\implies$ $\mathcal{N}$-sensitivity $\implies$ sensitivity.

Note that strong multi-sensitivity implies multi-sensitivity for non-autonomous systems. The following example shows that for non-autonomous dynamical systems multi-sensitivity need not imply $\mathcal{N}$-sensitivity and hence need not imply strongly multi-sensitivity. 
\begin{exm} Let $\mathbb{S}^1$ be the unit circle with the arc length metric. Let $f_n: \mathbb{S}^1\to \mathbb{S}^1$ and $g_m:\mathbb{S}^1\to \mathbb{S}^1$ be defined as $f_1 = g_1 =id_{\mathbb{S}^1}$ and 
$$ f_n(e^{i\phi_1}) = e^{i\left(\frac{n}{n-1}\right)\phi_1}, \ \ g_m(e^{i\phi_2}) = e^{i\left(\frac{m-1}{m}\right)\phi_2}, \text{ for } n,m\geq 2.$$ Consider the non-autonomous system $(\mathbb{S}^1, h_{1,\infty})$, where $h_n$, for each $n\in\N$ is given by $h_{2n-1}(e^{i\phi}) = f_1^n(e^{i\phi})$ and $h_{2n}(e^{i\phi})= g_1^n(e^{i\phi})$, i.e., 
$$h_{1,\infty} = \{f_1, g_1, (f_2\circ f_1), (g_2\circ g_1), \ldots, (f_n\circ\cdots\circ f_1), (g_n\circ\cdots\circ g_1), \ldots\}. $$
Then using the fact that $f_1^m(e^{i\phi}) = e^{im\phi}$, for every $m\geq 1$ and $h_1^{2n-1}(e^{i\phi}) = f_1^n(e^{i\phi})$, for every $n\geq 1$, we get that $(\mathbb{S}^1, h_{1,\infty})$ is multi-sensitive. But as $h_1^{2n}(e^{i\phi}) = e^{i\phi}$, for every $n\geq 1$, we get that $(\mathbb{S}^1, h_{1,\infty})$ cannot be $\mathcal{N}$-sensitive or strongly multi-sensitive.
\end{exm} 
\begin{rmk}The above example also shows that  weakly mixing need not imply $\mathcal{N}$-sensitivity or strong multi-sensitivity for non-autonomous dynamical systems.
\end{rmk}

\begin{rmk} For autonomous dynamical systems Jiao et al. have proved that  $([0, 1],f)$ is sensitive if and only if it is multi-sensitive if and only if it is $\mathcal{N}$-sensitive if and only if it is strongly multi-sensitive \cite[Corollary 3.2.]{JWLL2018}. However, this is not always true for non-autonomous dynamical systems as seen by the following example. 
\end{rmk}
\begin{exm} Let $a$, $b$, $c$, $d$ be distinct rationals in $(0,1)$ and $A$ be the set of  such elements. For every $(a, b, c, d)\in A$, we consider a homeomorphism $f: [0, 1]\to [0, 1]$ such that $f(a) = c$ and $f(b) = d$. Let $\{f_n\}_{n\in\N}$ be an enumeration of such functions and consider the non-autonomous system $([0, 1], \f)$ as follows:
\[f_m(x) = \begin{cases}
(f_n)^{-1} & \text{if} \ m\in\{4n-3, 4n\}, \\
f_n & \text{if} \ m\in\{4n-2, 4n-1\}. 
\end{cases}  \] We claim that $([0, 1], \f)$ is sensitive. Let $U$ be any nonempty open subset of $[0, 1]$ and $x$, $y\in[0, 1]$ such that $|x-y|>3\delta$ and $W_1$, $W_2$ be the $\delta$-neighborhoods of $x$ and $y$, respectively. Since rationals in $(0, 1)$ are dense in $[0, 1]$, therefore there exist distinct rationals $\alpha$, $\beta\in U$, $\gamma\in W_1$ and $\eta\in W_2$. Moreover, there exists an $n\in\N$ such that $f_n(\alpha) = \gamma$ and $f_n(\beta) = \eta$. Now, the fact $|x-y|>3\delta$ and triangle inequality imply that $|f_n(\alpha)- f_n(\beta)|>\delta$. Then using $f_1^{4n-1} = f_n$, for each $n\in\N$, we get that $([0, 1], \f)$ is sensitive. But as $f_1^{4n} = id$, for each $n\in\N$, $([0, 1], \f)$ cannot be $\mathcal{N}$-sensitive or strongly multi-sensitive.
\end{exm}
Now, we show that for a periodic non-autonomous system the above equivalence is true. 
\begin{lem}\label{1} Let $(X, \f)$ be a $k$-periodic non-autonomous system. Then $(X, \f)$ is  strongly multi-sensitive (respectively, $\mathcal{N}$-sensitive) if and only if $(X, f_k\circ\cdots\circ f_1)$ is strongly multi-sensitive (respectively, $\mathcal{N}$-sensitive).
\end{lem}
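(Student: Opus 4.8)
The plan is to reduce the whole statement to two bookkeeping identities relating $(X,\f)$ to its $k$-th iterate $\f^{[k]}$, and then to move the sets $N_{\f^{[v]}}(\cdot,\delta)$ back and forth between the two systems. First I would record the identities. Straight from the definition of the $v$-th iterate, for all $v,m\in\N$ the $m$-step map of the non-autonomous system $\f^{[v]}$ is $(\f^{[v]})_1^m=f_1^{vm}$, so for any $V\subseteq X$ and $\delta>0$
\[
N_{\f^{[v]}}(V,\delta)=\{\,m\in\N:\ \exists\,u,w\in V\ \text{with}\ d(f_1^{vm}(u),f_1^{vm}(w))>\delta\,\}.
\]
Next, using $k$-periodicity ($f_{j+kl}=f_j$), I would check that the $k$-th iterate $\f^{[k]}$ is the constant sequence $g,g,g,\dots$ with $g:=f_k\circ\cdots\circ f_1$ — that is, $\f^{[k]}$ is exactly the autonomous system $(X,g)$ — and hence $g^\ell=f_1^{k\ell}$ for every $\ell\in\N$. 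Combining the two, the $v$-th iterate of $(X,g)$ has $m$-step map $g^{vm}=f_1^{kvm}$, whence $N_{g^{[v]}}(V,\delta)=N_{\f^{[kv]}}(V,\delta)$ for all $v$, $V$, $\delta$. This index chase is the only computational content of the proof; I expect it to require care rather than ingenuity.

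Given these identities, the strong multi-sensitivity equivalence is essentially immediate. For ($\Rightarrow$): fix a vector $\mathbf v=(v_1,\dots,v_r)\in\N^r$ and nonempty open $U_1,\dots,U_r\subseteq X$; applying multi-sensitivity of $(X,\f)$ with respect to the vector $(kv_1,\dots,kv_r)\in\N^r$ yields $\delta>0$ with $\bigcap_{i=1}^r N_{\f^{[kv_i]}}(U_i,\delta)\ne\varnothing$, and since $N_{\f^{[kv_i]}}(U_i,\delta)=N_{g^{[v_i]}}(U_i,\delta)$ this is the same as $\bigcap_{i=1}^r N_{g^{[v_i]}}(U_i,\delta)\ne\varnothing$, so $(X,g)$ is multi-sensitive with respect to $\mathbf v$; as $\mathbf v$ and $r$ were arbitrary, $(X,g)$ is strongly multi-sensitive. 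For ($\Leftarrow$): given $\mathbf v$ and $U_1,\dots,U_r$, choose $\delta>0$ and $m_0\in\bigcap_{i=1}^r N_{g^{[v_i]}}(U_i,\delta)$; for each $i$ there are $u,w\in U_i$ with $d(f_1^{kv_im_0}(u),f_1^{kv_im_0}(w))>\delta$, and since $(\f^{[v_i]})_1^{km_0}=f_1^{kv_im_0}$ this says $km_0\in N_{\f^{[v_i]}}(U_i,\delta)$, hence $km_0\in\bigcap_{i=1}^r N_{\f^{[v_i]}}(U_i,\delta)$ and $(X,\f)$ is strongly multi-sensitive.

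For $\mathcal N$-sensitivity the converse is word-for-word the scaling argument above, now applied to the vectors $(1,2,\dots,n)$: from $m_0\in\bigcap_{i=1}^n N_{g^{[i]}}(U_i,\delta)$ one gets $km_0\in\bigcap_{i=1}^n N_{\f^{[i]}}(U_i,\delta)$. The forward direction is the one genuine wrinkle, because $\mathcal N$-sensitivity only provides multi-sensitivity with respect to vectors $(1,2,\dots,n)$, which are not closed under the rescaling $n\mapsto kn$ used in the strong case, so I cannot scale the vector directly. The remedy is to pad: given $n\in\N$ and nonempty open $V_1,\dots,V_n$, set $U_{jk}:=V_j$ for $1\le j\le n$ and $U_\ell:=X$ for the remaining $\ell\in\{1,\dots,nk\}$; $\mathcal N$-sensitivity of $(X,\f)$ with the vector $(1,2,\dots,nk)$ gives $\delta>0$ with $\bigcap_{i=1}^{nk}N_{\f^{[i]}}(U_i,\delta)\ne\varnothing$, and passing to the sub-intersection over the indices $k,2k,\dots,nk$ (which only enlarges the set) together with $N_{\f^{[jk]}}(V_j,\delta)=N_{g^{[j]}}(V_j,\delta)$ yields $\bigcap_{j=1}^n N_{g^{[j]}}(V_j,\delta)\ne\varnothing$; thus $(X,g)$ is multi-sensitive with respect to $(1,2,\dots,n)$ for every $n$, i.e. $\mathcal N$-sensitive.

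In short, the main obstacle is not a hard step but rather getting the two iterate/periodicity identities exactly right — in particular, that $\f^{[k]}$ collapses to the single map $g$ and that $(\f^{[v]})_1^m=f_1^{vm}$ — and spotting the padding device that salvages the forward $\mathcal N$-sensitivity implication despite the special shape of the admissible vectors.
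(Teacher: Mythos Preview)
Your proof is correct and rests on the same core identity as the paper's, namely $g^m=f_1^{km}$ for $g:=f_k\circ\cdots\circ f_1$, hence $N_{g^{[v]}}(V,\delta)=N_{\f^{[kv]}}(V,\delta)$. The execution differs in two places. For the forward direction in the strongly multi-sensitive case, the paper repeats each open set $k$ times and applies the hypothesis to the $rk$-long vector $(v_1,2v_1,\dots,kv_1,\dots,v_r,2v_r,\dots,kv_r)$, then extracts the coordinates indexed by $k,2k,\dots,rk$; your direct rescaling $(v_1,\dots,v_r)\mapsto(kv_1,\dots,kv_r)$ reaches the same conclusion with less bookkeeping. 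For the forward $\mathcal N$-sensitive direction, the paper only remarks ``taking the vector $(1,2,\dots,r)$ and proceeding as above,'' which is not literally the same argument since the auxiliary vector produced that way is not of the form $(1,2,\dots,n)$; your explicit padding with $U_\ell=X$ on the non-multiples of $k$ makes this step self-contained and transparent. The converse directions coincide in both treatments.
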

\begin{proof}
Let $(X, \f)$ be strongly multi-sensitive and for every $r\in\N$, $\mathbf{v} = (v_1, v_2, \ldots, v_r)$ $\in \N^r$ be arbitrary. Let $U_1$, $U_2$, \ldots, $U_r$ be any collection of nonempty open subsets of $X$. Let $W_i = U_1$, $W_{i+k} = U_2$, \ldots, $W_{(r-1)k+i} = U_r$, for every $i\in\{1, 2, \ldots, k\}$. Now, using the fact that $(X, \f)$ is strongly multi-sensitive, we get existence of a $\delta>0$ such that $\bigcap_{i=1}^{rk} N_{f_{1,\infty}^{[v_i']}}(W_i, \delta)\ne\varnothing$, for the vector $\mathbf{v'}= (v_1', v_2', \ldots, v_{rk}')=(v_1, 2v_1, \ldots, kv_1, v_2, 2v_2, \ldots, kv_2, \ldots,$ $v_r, 2v_r, \ldots, kv_r)$. Therefore, for $n\in \bigcap_{i=1}^{rk} N_{f_{1,\infty}^{[v_i']}}(W_i$, $\delta)$ there exist  $w_i$, $w_i'\in W_i$ such that $d(f_1^{nv_i'}(w_i), f_1^{nv_i'}(w_i'))>\delta$, for every $i\in\{1, 2, \ldots, rk\}$. Consequently, there exist $u_j= w_{jk}\in U_j$ and $u_j'=w_{jk}'\in U_j$, for each $j\in\{1, 2, \ldots, r\}$ satisfying $d(f_1^{nkv_i}(u_i), f_1^{nkv_i}(u_i'))>\delta$, for every $i\in\{1, 2, \ldots, r\}$. Now, using the $k$-periodicity of $(X, \f)$, we get that $f_1^{nkv_i} = (f_1^k)^{nv_i} = (f_k\circ\cdots\circ f_1)^{nv_i}$, for every $i\in\{1, 2, \ldots, r\}$. Thus, $d(g^{nv_i}(u_i), g^{nv_i}(u_i'))>\delta$, where $g=  f_k\circ\cdots\circ f_1$. Therefore, $\bigcap_{i=1}^r N_{g^{[v_i]}}(U_i, \delta)\ne\varnothing$ and hence the autonomous system $(X, g)$ is strongly multi-sensitive.

Conversely, let $(X, g)$ be strongly multi-sensitive and for every $r\in\N$, $\mathbf{v} = (v_1, v_2, \ldots$, $v_r)\in \N^r$ be arbitrary. Let $U_1$, $U_2$, \ldots, $U_r$ be any collection of nonempty open subsets of $X$. Since $(X, g)$ is strongly multi-sensitive, we get that $\bigcap_{i=1}^r N_{g^{[v_i]}}(U_i, \delta)\ne\varnothing$, for some $\delta>0$. Then again using the fact that $(f_1^k)^s = f_1^{ks}$, for every $s\in\N$, it is easy to see that $\bigcap_{i=1}^{r} N_{f_{1,\infty}^{[v_i]}}(U_i, \delta)\ne\varnothing$. Hence, $(X, \f)$ is strongly multi-sensitive. Note that taking the vector $\mathbf{v} = (1, 2, \ldots, r)$ and proceeding as above we get that $(X, \f)$ is $\mathcal{N}$-sensitive  if and only if $(X, f_k\circ\cdots\circ f_1)$ is $\mathcal{N}$-sensitive.
\end{proof}
\begin{rmk} Note that by using Lemma \ref{1} we can construct many examples of a non-autonomous discrete dynamical system which is strongly multi-sensitive or $\mathcal{N}$-sensitive. Moreover, we have examples of a non-autonomous  system in which every member of $\f$ is strongly multi-sensitive (respectively, $\mathcal{N}$-sensitive) but the non-autonomous system itself is not even sensitive and vice versa. For instance,  one can see that in  \cite[Example 4.8]{SD2020c} every member of $\f$ is strongly multi-sensitive but the non-autonomous system  is not sensitive. Using Lemma \ref{1}, we can find examples of a non-autonomous system satisfying that the system is   strongly multi-sensitive but none of the maps $f_i$ is sensitive.
\end{rmk}
Using Lemma \ref{1}, \cite[Corollary 3.2]{JWLL2018} and proceeding as in \cite[Theorem 5.1]{SD2019}, we get the following.
\begin{thm}\label{2} For  a periodic non-autonomous system $([0, 1], \f)$ the following are equivalent:
\begin{enumerate}[(1)]
\item Strongly multi-sensitivity
\item $\mathcal{N}$-sensitivity
\item Multi-sensitivity
\item Sensitivity
\end{enumerate}
\end{thm}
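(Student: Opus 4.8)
The plan is to push all four notions for $([0,1],\f)$ down to the autonomous ``return map'' $g:=f_k\circ\cdots\circ f_1$ of the $k$-periodic system --- note that by $k$-periodicity the $k$-th iterate $f_{1,\infty}^{[k]}$ is precisely the autonomous system $([0,1],g)$ --- and then feed in the known interval result of Jiao et al.\ together with Lemma~\ref{1}. First I would clear away the trivial implications: by the chain recorded in Section~\ref{S-3} (strong multi-sensitivity $\Rightarrow\mathcal{N}$-sensitivity $\Rightarrow$ sensitivity, and strong multi-sensitivity $\Rightarrow$ multi-sensitivity $\Rightarrow$ sensitivity) one already has $(1)\Rightarrow(2)\Rightarrow(4)$ and $(1)\Rightarrow(3)\Rightarrow(4)$, so the whole theorem reduces to the single implication $(4)\Rightarrow(1)$: for a periodic system on $[0,1]$, sensitivity forces strong multi-sensitivity.

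The first step is to transfer plain sensitivity from $\f$ to the autonomous system $([0,1],g)$, in the spirit of \cite[Theorem~5.1]{SD2019}. I would use that $k$-periodicity gives $f_1^n=f_1^j\circ g^m$ whenever $n=mk+j$ with $0\le j\le k-1$, and that $f_1^0=id,f_1^1,\dots,f_1^{k-1}$ are finitely many continuous, hence uniformly continuous, self-maps of the compact interval $[0,1]$. Fix a sensitivity constant $\delta'$ for $\f$ and a common modulus $\delta>0$ of uniform continuity of these $k$ maps for the error $\delta'$. Now test sensitivity of $g$ only on a base of small open sets: given a nonempty open $W\subseteq[0,1]$, pick an open $V\subseteq W$ with $\mathrm{diam}(V)<\delta$. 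Any separating time $n$ for $V$ under $\f$ must exceed $k-1$ (otherwise $f_1^n$ would be one of the remainder maps, which cannot separate points of $V$), so $n=mk+j$ with $m\ge 1$; and then $d(f_1^j(g^m(u)),f_1^j(g^m(v)))>\delta'$ forces $d(g^m(u),g^m(v))\ge\delta$ by the choice of $\delta$. Hence $m\in N_g(W,\delta/2)$, so $([0,1],g)$ is sensitive.

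Once $([0,1],g)$ is sensitive I would invoke \cite[Corollary~3.2]{JWLL2018}: since $g$ is a continuous self-map of $[0,1]$, that result upgrades sensitivity of $([0,1],g)$ all the way to strong multi-sensitivity. Finally, the strong multi-sensitivity case of Lemma~\ref{1} says $([0,1],\f)$ is strongly multi-sensitive if and only if $([0,1],g)$ is, which gives $(1)$ and closes the circle. (One could also track $(2)$ and $(3)$ explicitly via the $\mathcal{N}$-sensitivity case of Lemma~\ref{1} and the vector $(1,2,\dots,n)$, but combining the already-established equivalences is enough.)

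The hard part is that first step. Lemma~\ref{1} settled the vector-type notions almost for free because a separating time there may always be replaced by its $k$-th multiple; plain sensitivity, however, supplies only one separating time $n$ with no control on $n\bmod k$, and a small residual exponent $j$ could a priori be responsible for the entire separation. The fix --- testing $g$-sensitivity only on sufficiently small open sets, so that the remainder maps $f_1^j$ are too contracting to matter --- is exactly the point where compactness of $[0,1]$ and uniform continuity of the finitely many $f_1^j$ are used; after that, the genuinely nontrivial interval input is \cite[Corollary~3.2]{JWLL2018}, which is the other place where the hypothesis that the phase space is $[0,1]$ cannot be dropped.
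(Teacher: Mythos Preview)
Your proposal is correct and follows essentially the same route as the paper: the paper's entire proof is the one-line ``Using Lemma~\ref{1}, \cite[Corollary~3.2]{JWLL2018} and proceeding as in \cite[Theorem~5.1]{SD2019}'', and your three ingredients are exactly these, in the same logical order. The only difference is cosmetic: you spell out the sensitivity transfer $([0,1],\f)\Rightarrow([0,1],g)$ explicitly (uniform continuity of the finitely many remainder maps $f_1^0,\dots,f_1^{k-1}$ on the compact interval, forcing any separating time $n$ to have $m=\lfloor n/k\rfloor\ge 1$ and $d(g^m(u),g^m(v))\ge\delta$), whereas the paper simply points to \cite[Theorem~5.1]{SD2019} for that step.
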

\begin{cor}\label{MT-MS} If a periodic non-autonomous system $([0, 1], \f)$ is multi-transitive, then it is strongly multi-sensitive.
\end{cor}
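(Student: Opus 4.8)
The plan is to pass to the ``return map'' $g=f_k\circ\cdots\circ f_1=f_1^k$, show that the autonomous system $([0,1],g)$ inherits topological transitivity from the multi-transitivity of $([0,1],\f)$, invoke the classical interval dichotomy to upgrade transitivity to sensitivity, and then pull strong multi-sensitivity back to $\f$ via Lemma \ref{1}.

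First I would check that $([0,1],g)$ is topologically transitive. Given nonempty open sets $U,V\subseteq[0,1]$, I would apply the definition of multi-transitivity to the collection $U_1=\cdots=U_k=U$, $V_1=\cdots=V_k=V$ with $m=k$ equal to the period: this produces some $N\in\N$ with $f_1^{iN}(U)\cap V\ne\varnothing$ for every $i\in\{1,\ldots,k\}$. Taking $i=k$ and rewriting $f_1^{kN}=(f_1^k)^N=g^N$ by $k$-periodicity (exactly as in the proof of Lemma \ref{1}) gives $g^N(U)\cap V\ne\varnothing$; since $U,V$ are arbitrary, $g$ is topologically transitive.

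Next, since a topologically transitive continuous self-map of the nondegenerate interval $[0,1]$ is sensitive (in fact Devaney chaotic, by the Vellekoop--Berglund theorem), $([0,1],g)$ is sensitive, hence strongly multi-sensitive by \cite[Corollary 3.2]{JWLL2018}. Finally Lemma \ref{1}, applied with $X=[0,1]$, transfers strong multi-sensitivity from $g=f_k\circ\cdots\circ f_1$ to $([0,1],\f)$. (Alternatively, once $g$ is sensitive one sees directly that $d(g^N(u),g^N(v))>\delta$ implies $d(f_1^{kN}(u),f_1^{kN}(v))>\delta$, so $\f$ is sensitive, and Theorem \ref{2} then applies.)

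The one point needing care is the first step: the number of coordinates in the multi-transitivity condition must be chosen to be (a multiple of) the period $k$, so that the $i=k$ coordinate yields an honest iterate $g^N$ of the return map rather than an arbitrary composition $f_1^n$. Everything else is bookkeeping together with the cited interval result and Lemma \ref{1}, so I do not expect a substantial obstacle.
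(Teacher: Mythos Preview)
Your argument is correct and follows essentially the same route as the paper: reduce to the autonomous return map $g=f_k\circ\cdots\circ f_1$, use the interval result that transitivity implies sensitivity, and then lift back via Theorem~\ref{2} (equivalently, Lemma~\ref{1} together with \cite[Corollary 3.2]{JWLL2018}). The only cosmetic difference is that the paper invokes \cite[Proposition~3.1]{SD2020} to obtain (multi\nobreakdash-)transitivity of $g$, whereas you prove transitivity of $g$ directly from the $m=k$ case of multi-transitivity.
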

\begin{proof}
Let $([0, 1], \f)$ be a $k$-periodic multi-transitive non-autonomous system and $g = f_k\circ\cdots\circ f_1$. Then by \cite[Proposition 3.1]{SD2020}, we get that  $([0, 1], g)$ is multi-transitive. Now, for autonomous systems it is well known that topological transitivity on intervals implies  sensitivity, therefore  $([0, 1], g)$ is sensitive and hence the non-autonomous system $([0,1], \f)$ is sensitive. Thus, using Theorem \ref{2}, we get that $([0, 1], \f)$ is strongly multi-sensitive.
\end{proof}
\begin{rmk} For non-autonomous dynamical systems on the closed unit interval, S\'anchez et al. \cite[Example 4.8]{MR3584171} have proved that topological transitivity need not imply Devaney chaos in contrast to the autonomous dynamical systems. Similar to Corollary \ref{MT-MS}, it can be verified that multi-transitivity on any interval implies Devaney chaos for periodic non-autonomous dynamical systems. 
\end{rmk}
As far as known to us following results  have not been studied for autonomous dynamical systems. 
\begin{thm} Let $(X, d_1)$ and $(Y, d_2)$ be two metric spaces and $(X, \f)$ and $(Y, g_{1,\infty})$ be two non-autonomous systems. Then the non-autonomous system $(X\times Y, \f\times g_{1,\infty})$ is strongly multi-sensitive (respectively, $\mathcal{N}$-sensitive) if and only if $(X, \f)$ or $(Y, g_{1,\infty})$ is strongly multi-sensitive (respectively, $\mathcal{N}$-sensitive). 
\end{thm}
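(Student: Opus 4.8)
The plan is to deduce everything from two elementary inclusions. Write $\hat d=\sqrt{d_1^2+d_2^2}$ for the metric on $X\times Y$; since the product non-autonomous system has orbit maps $(f_1\times g_1)^n=f_1^n\times g_1^n$, its $v$-th iterate has orbit maps $f_1^{nv}\times g_1^{nv}$, and for an open box $U\times V$ a point $n$ lies in $N_{(\f\times g_{1,\infty})^{[v]}}(U\times V,\delta)$ precisely when $\operatorname{diam}(f_1^{nv}(U))^2+\operatorname{diam}(g_1^{nv}(V))^2>\delta^2$. This immediately yields
$$N_{\f^{[v]}}(U,\delta)\cup N_{g_{1,\infty}^{[v]}}(V,\delta)\ \subseteq\ N_{(\f\times g_{1,\infty})^{[v]}}(U\times V,\delta)\ \subseteq\ N_{\f^{[v]}}(U,\delta/\sqrt2)\cup N_{g_{1,\infty}^{[v]}}(V,\delta/\sqrt2),$$
together with the obvious monotonicity $N_\bullet(W',\delta)\subseteq N_\bullet(W,\delta)$ for $W'\subseteq W$. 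For the ``if'' part I would argue: assuming (say) $(X,\f)$ is strongly multi-sensitive, given $r\in\N$, a vector $\mathbf v=(v_1,\dots,v_r)$ and nonempty open $W_1,\dots,W_r\subseteq X\times Y$, shrink each $W_i$ to an open box $U_i\times V_i$, take $\delta>0$ with $\bigcap_i N_{\f^{[v_i]}}(U_i,\delta)\neq\varnothing$, and use the left inclusion and monotonicity to get $\bigcap_i N_{(\f\times g_{1,\infty})^{[v_i]}}(W_i,\delta)\neq\varnothing$; symmetrically when $(Y,g_{1,\infty})$ is strongly multi-sensitive. Running this with $\mathbf v=(1,2,\dots,n)$ gives the $\mathcal N$-sensitive version.

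For the ``only if'' part I would prove the contrapositive: assume neither factor is strongly multi-sensitive, and fix vectors $\mathbf a$, $\mathbf b$ for which $X$, $Y$ respectively fail to be multi-sensitive. First I would record two stability properties of ``not multi-sensitive with respect to $\mathbf v$'': it is inherited by every vector containing $\mathbf v$ as a sub-multiset (a longer vector's multi-sensitivity restricts to any sub-vector by repeating one open set on the surplus coordinates) and by every dilate $t\mathbf v$ (the same open sets work, since $N_{\f^{[tv]}}(U,\delta)=\{n:tn\in N_{\f^{[v]}}(U,\delta)\}$); the same holds for $(Y,g_{1,\infty})$. Hence the concatenation $\mathbf c=(c_1,\dots,c_m):=(\mathbf a,\mathbf b)$ is simultaneously bad for $X$ and for $Y$, and $\mathbf c$ may still be enlarged and rescaled freely. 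For each $\delta>0$, failure of multi-sensitivity of $X$ with respect to $\mathbf c$ supplies open $P_1,\dots,P_m\subseteq X$ such that every $n$ has some index $k$ with $\operatorname{diam} f_1^{nc_k}(P_k)\le\delta$, and likewise open $Q_1,\dots,Q_m\subseteq Y$ with the analogous property for $g_{1,\infty}$.

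The target is then a vector $\mathbf w$ such that, for every $\delta>0$, there are open boxes $U_j\times V_j$ with $\bigcap_j N_{(\f\times g_{1,\infty})^{[w_j]}}(U_j\times V_j,\delta)=\varnothing$; by the right inclusion it suffices that for each $n$ some coordinate $j$ have $\operatorname{diam} f_1^{nw_j}(U_j)\le\delta/\sqrt2$ and $\operatorname{diam} g_1^{nw_j}(V_j)\le\delta/\sqrt2$ simultaneously. One would build $\mathbf w$ out of rescaled copies of $\mathbf c$ (keeping it bad for both factors), place the sets $P_k$ on the $X$-parts of the boxes and the $Q_l$ on the $Y$-parts, and exploit that each family ``kills'' all of $\N$ coordinate-wise. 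The hard part — the only genuinely non-routine step — is that for a fixed $n$ the coordinate at which $\operatorname{diam} f_1^{nw_j}(U_j)$ becomes small and the one at which $\operatorname{diam} g_1^{nw_j}(V_j)$ becomes small need not be the same; forcing a common killing coordinate is what drives the proof, and I expect it to require enlarging $\mathbf w$ sufficiently — e.g.\ to a vector formed from iterated products of the entries of $\mathbf c$, which still contains rescaled copies of $\mathbf c$ — and a pigeonhole/finite-iteration over the $m$ entries of $\mathbf c$ that, for each $n$, locates one slot of $\mathbf w$ at which the $X$-killing and $Y$-killing indices agree. Once this alignment is achieved, $X\times Y$ is not multi-sensitive with respect to $\mathbf w$, hence not strongly multi-sensitive. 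The $\mathcal N$-sensitive case is the same, using only initial-segment vectors $(1,2,\dots,N)$, which are closed under sub-multiset enlargement and, after dilation, under passing to a longer initial segment.
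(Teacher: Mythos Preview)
Your ``if'' direction is correct and is essentially the paper's converse inclusion. The ``only if'' direction, however, is not a proof: you isolate exactly the right difficulty --- for a fixed $n$ the $X$-killing coordinate and the $Y$-killing coordinate need not agree --- but then only speculate about a solution (``I expect it to require \ldots'', ``once this alignment is achieved''). No concrete vector $\mathbf w$ is produced and no pigeonhole argument is actually carried out; moreover, your suggestion of passing to a vector built from iterated products of the entries of $\mathbf c$ does not obviously force a common killing coordinate, since enlarging $\mathbf w$ simultaneously enlarges the index set over which the $X$- and $Y$-killers are free to disagree. As written, this direction is a program, not a proof.

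The paper organizes this direction differently and sidesteps the alignment problem altogether. It argues by contradiction: assuming $X\times Y$ is strongly multi-sensitive while neither factor is, it fixes bad vectors $\mathbf v,\mathbf{v'}$ and bad open families $U_1,\dots,U_{r_1}\subseteq X$, $U_1',\dots,U_{r_2}'\subseteq Y$, and applies strong multi-sensitivity of the product to the \emph{full grid} of boxes $U_i\times U_j'$ to obtain a single time $n$ at which every such box has product-diameter $>\delta$; the box $U_{s_n}\times U_{t_n}'$ (with $s_n,t_n$ the bad indices at time $n$ in $X$ and $Y$ respectively) then has product-diameter $\le\delta$, a contradiction. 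The device of using all boxes $U_i\times U_j'$, rather than a diagonal family, is precisely what decouples the two killing indices, so no ``common coordinate'' ever has to be found. You should be aware, though, that the paper's displayed intersection involves $N_{\f^{[v_i]}\times g_{1,\infty}^{[v_j']}}(U_i\times U_j',\delta)$, and $\f^{[v_i]}\times g_{1,\infty}^{[v_j']}$ is an iterate of $\f\times g_{1,\infty}$ only when $v_i=v_j'$; so this appeal to strong multi-sensitivity of the product, as literally written, also needs care --- at the very least a reduction to a single common bad vector for both factors, which is essentially your concatenation observation.
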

\begin{proof}
Let $(X\times Y, \f\times g_{1,\infty})$ be strongly multi-sensitive. We need to show that $(X, \f)$ or $(Y, g_{1,\infty})$ is strongly multi-sensitive. Assume the contrary. Therefore, for every $\epsilon>0$ there exist $r_1$, $r_2\in\N$ such that for vectors $\mathbf{v} =(v_1, v_2, \ldots, v_{r_1})$,  $\mathbf{v'} =(v_1', v_2', \ldots, v_{r_1}')$ and nonempty open subsets $U_1$, $U_2$, \ldots, $U_{r_1}$ of $X$ and $U_1'$, $U_2'$, \ldots, $U_{r_1}'$ of $Y$, for every $m\in\N$, there exist $s_m\in\{1, 2, \ldots, r_1\}$ and $t_m\in\{1, 2, \ldots, r_2\}$ with \begin{equation}\label{3E1}
\text{diam}(f_1^{mv_{s_m}}(U_{s_m}))\leq \frac{\epsilon}{\sqrt{2}} \ \text{ and } \ \text{diam}(f_1^{mv_{t_m}'}(U_{t_m}'))\leq \frac{\epsilon}{\sqrt{2}}.
\end{equation}
Now, since each $U_i\times U_j'$, for $i\in\{1, 2, \ldots, r_1\}$ and $j\in\{1, 2, \ldots, r_2\}$ is a nonempty open subset of $X\times Y$, therefore by strong multi-sensitivity of $\f\times g_{1,\infty}$,  for the vectors $\mathbf{v}$ and $\mathbf{v'}$, we have $\bigcap_{i=1}^{r_1}\bigcap_{j=1}^{r_2} N_{f_{1,\infty}^{[v_i]}\times g_{1,\infty}^{[v_j']}}(U_i\times U_j', \delta)\ne\varnothing$, for some $\delta>0$. This implies that there exists an $n\in\N$ such that diam$(f_1^{nv_i}(U_i)\times g_1^{nv_j'}(U_j'))>\delta$, for every $i\in\{1, 2, \ldots, r_1\}$ and $j\in\{1, 2, \ldots, r_2\}$. Using \eqref{3E1} with $\epsilon=\delta$, we get the following contradiction.  
\[ \delta< \text{diam}\left(f_1^{nv_{s_n}}(U_{s_n})\times g_1^{nv_{t_n}'}(U_{t_n}')\right)\leq \sqrt{\left(\frac{\delta}{\sqrt{2}}\right)^2+\left(\frac{\delta}{\sqrt{2}}\right)^2}= \delta.\]
Converse follows by using the fact that for any $r\in\N$ and vectors $\mathbf{v} =(v_1, v_2, \ldots, v_{r})$,  $\mathbf{v'} =(v_1', v_2', \ldots, v_{r}')$, \[\bigcap_{i=1}^{r} N_{f_{1,\infty}^{[v_i]}\times g_{1,\infty}^{[v_i']}}\left(U_i\times U_i', \delta\right)\supseteq \left(\bigcap_{i=1}^{r} N_{f_{1,\infty}^{[v_i]}}(U_i, \delta)\right)\bigcup\left( \bigcap_{i=1}^{r} N_{ g_{1,\infty}^{[v_i']}}(U_i', \delta)\right),\] for some $\delta>0$ and for every collection of nonempty open subsets $U_1$, $U_2$, \ldots, $U_{r}$ of $X$ and $U_1'$, $U_2'$, \ldots, $U_{r}'$ of $Y$. The proof for $\mathcal{N}$-sensitivity  is similar.
\end{proof}
\begin{thm} Let $(X, \f)$ be a non-autonomous system. If $(\M, \W)$ is strongly multi-sensitive (respectively, $\mathcal{N}$-sensitive), then $(X, \f)$ is strongly multi-sensitive (respectively, $\mathcal{N}$-sensitive).
\end{thm}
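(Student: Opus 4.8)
The plan is to transfer sensitivity from $(\M,\W)$ down to $(X,\f)$ by representing a point $x\in X$ by the Dirac measure $\delta_x$ and exploiting that the induced maps are push‑forwards, so that $\widetilde f_1^{\,m}(\delta_x)=\delta_{f_1^m(x)}$ and, more generally, $(\W^{[v]})_1^n=\widetilde{(f_1^{\,nv})}$ by functoriality of push‑forward. The main obstruction is that the ``Dirac slice'' $\{\delta_x:x\in U\}$ over a nonempty open set $U\subseteq X$ is \emph{not} open in $\M$ (it lies inside the closed set $\mathcal M_1(X)$), so one cannot feed it directly into the hypothesis; the whole point of the argument is to get around this.

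First I would replace each open $U\subseteq X$ by the ``fattened'' set $\mathcal U(U,\eta):=\{\nu\in\M:\nu(U)>1-\eta\}$, where $\eta>0$ is a parameter to be chosen. Since $U$ is open, the map $\nu\mapsto\nu(U)$ is lower semicontinuous for the weak$^*$‑topology (Portmanteau), so $\mathcal U(U,\eta)$ is open in $\M$; it is nonempty because $\delta_x\in\mathcal U(U,\eta)$ for every $x\in U$. These are the sets I will actually plug into the strong multi‑sensitivity (respectively, $\mathcal N$‑sensitivity) of $(\M,\W)$.

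The technical heart is the following estimate: if $g:X\to X$ is continuous with $\text{diam}(g(U))<\rho$, then $\mathcal D(\widetilde g(\mu),\widetilde g(\nu))\le\max\{\rho,\eta\}$ for all $\mu,\nu\in\mathcal U(U,\eta)$, where $\widetilde g(\lambda)(B)=\lambda(g^{-1}(B))$. To prove it, set $\epsilon=\max\{\rho,\eta\}$ and fix a Borel set $B$. If $B\cap g(U)=\varnothing$, then $g^{-1}(B)\subseteq X\setminus U$, whence $\widetilde g(\mu)(B)\le\mu(X\setminus U)<\eta\le\epsilon$. If $B\cap g(U)\ne\varnothing$, pick $w\in B\cap g(U)$; since $\text{diam}(g(U))<\rho$ we get $g(U)\subseteq B_d(w,\rho)\subseteq N(B,\epsilon)$, hence $g^{-1}(N(B,\epsilon))\supseteq U$ and $\widetilde g(\nu)(N(B,\epsilon))\ge\nu(U)>1-\eta\ge1-\epsilon$. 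In both cases $\widetilde g(\mu)(B)\le\widetilde g(\nu)(N(B,\epsilon))+\epsilon$, so $\mathcal D(\widetilde g(\mu),\widetilde g(\nu))\le\epsilon$ directly from the definition of $\mathcal D$. Letting $\rho$ decrease to $\text{diam}(g(U))$ gives $\mathcal D(\widetilde g(\mu),\widetilde g(\nu))\le\max\{\text{diam}(g(U)),\eta\}$.

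Finally I would assemble: let $\delta>0$ be a strong multi‑sensitivity constant of $(\M,\W)$ and put $\eta=\delta/2$. Given $r\in\N$, a vector $\mathbf v=(v_1,\dots,v_r)\in\N^r$, and nonempty open $U_1,\dots,U_r\subseteq X$, apply the hypothesis to the nonempty open sets $\mathcal U_i:=\mathcal U(U_i,\eta)$ and the vector $\mathbf v$: there is $n\in\N$ and, for each $i$, measures $\mu_i,\nu_i\in\mathcal U_i$ with $\mathcal D\!\left((\W^{[v_i]})_1^n(\mu_i),(\W^{[v_i]})_1^n(\nu_i)\right)>\delta$, i.e.\ with $\mathcal D(\widetilde{(f_1^{\,nv_i})}(\mu_i),\widetilde{(f_1^{\,nv_i})}(\nu_i))>\delta$. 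Applying the estimate with $g=f_1^{\,nv_i}$ and $\eta=\delta/2<\delta$ forces $\text{diam}(f_1^{\,nv_i}(U_i))>\delta$, so there exist $u_i,u_i'\in U_i$ with $d(f_1^{\,nv_i}(u_i),f_1^{\,nv_i}(u_i'))>\delta$; that is, $n\in\bigcap_{i=1}^r N_{\f^{[v_i]}}(U_i,\delta)$. Hence $(X,\f)$ is strongly multi‑sensitive with the same constant $\delta$. The $\mathcal N$‑sensitive case is verbatim the same, restricting throughout to vectors of the form $(1,2,\dots,n)$. As indicated, the only real difficulty is the non‑openness of the Dirac slice, and it is precisely the lower‑semicontinuity fattening together with the diameter estimate that removes it.
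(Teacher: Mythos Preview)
Your proof is correct. The core technical step---the two--case Prohorov estimate showing that if $\operatorname{diam}(g(U))$ is small then push--forwards of measures concentrated on $U$ stay $\mathcal D$--close---is exactly the estimate the paper uses (its Case~1/Case~2 analysis). The packaging, however, differs in two respects. First, the paper argues by contradiction (assume $(X,\f)$ is not strongly multi--sensitive, produce an index $s_n$ with small diameter, and contradict sensitivity of $(\M,\W)$), whereas you give a direct argument. Second, the paper feeds Prohorov balls $B_{\mathcal D}(\delta_{x},\xi)$ around Dirac measures into the hypothesis and observes that $\mu\in B_{\mathcal D}(\delta_{x},\xi)$ forces $\mu(B_d(x,\xi))\ge 1-\xi$, while you go straight to the Portmanteau--open sets $\mathcal U(U,\eta)=\{\nu:\nu(U)>1-\eta\}$. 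Your choice is a bit cleaner: it avoids shrinking $U_i$ to a small ball first and makes the role of the parameter $\eta$ transparent, and the direct format yields the sensitivity constant $\delta$ for $(X,\f)$ immediately. Substantively, though, the two arguments are the same idea viewed from opposite directions.
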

\begin{proof}
Let $(\M, \W)$ be strongly multi-sensitive and assume that $(X, \f)$ is not strongly multi-sensitive. Then  for every $\delta>0$, there exists an  $r\in\N$ such that for every vector $\mathbf{v}=(v_1, \ldots, v_r)$ and nonempty open subsets $U_1$, $U_2$, \ldots, $U_r$ of $X$, for every $n\in\N$ there exist $s_n\in\{1,2, \ldots, r\}$ such that diam($f_1^{nv_{s_n}}(U_{s_n})\leq \delta/2$. For $x_{s_n}\in U_{s_n}$, we can take $B_d(x_{s_n}, \xi)\subseteq U_{s_n}$, for $0<\xi<\delta/2$ which implies that  $$\text{diam}\left(f_1^{nv_{s_n}}(B_d(x_{s_n}, \xi)\right)\leq \delta/2, \text{ for every } n\in\N.$$ By the definition of Prohorov metric, for any $\mu\in B_{\mathcal{D}}(\delta_{x_{s_n}}, \xi)$ and for any $B\in\mathcal{B}(X)$, there exists an  $0<\epsilon<\xi$ such that $\delta_{x_{s_n}}(B)\leq \mu(N(B, \epsilon))+\epsilon$ implying that $\mu(B_d(x_{s_n}, \xi))\geq (1-\epsilon)$, for every $n\in\N$. For any $B\in\mathcal{B}(X)$ and for any $n\in\N$, we have the following cases:
\begin{enumerate}
\item[Case 1.] If $f_1^{nv_{s_n}}(x_{s_n})\notin B$, then $ 0= \delta_{x_{{s_n}}}(f_1^{-nv_{s_n}}(B))\leq \mu(f_1^{-nv_{s_n}}(N(B, \delta)))+\delta$.
\item[Case 2.] If $f_1^{nv_{s_n}}(x_{s_n})\in B$, then it can be verified that $1 = \delta_{x_{{s_n}}}(f_1^{-nv_{s_n}}(B))\leq (1-\epsilon +\delta)\leq \mu(f_1^{-nv_{s_n}}(N(B, \delta)))+\delta$.
\end{enumerate}
Therefore, from the above cases, we get that $\mathcal{D}(\widetilde{f}_1^{nv_{s_n}}(\delta_{x_{{s_n}}}), \widetilde{f}_1^{nv_{s_n}}(\mu))\leq \delta$, for every $n\in\N$ and for every $\delta>0$, which is a contradiction to the fact that $(\M, \W)$ is strongly multi-sensitive. Hence, $(X, \f)$ is strongly multi-sensitive. Similarly, if $(\M, \W)$ is $\mathcal{N}$-sensitive, then it can be verified that $(X, \f)$ is $\mathcal{N}$-sensitive.
\end{proof}
For the converse, we have the following.
\begin{lem}\label{L3}(\cite[Theorem 4.3]{SD2019b}) A non-autonomous system $(X, \f)$ is multi-sensitive if and only if $(\M, \W)$ is multi-sensitive.
\end{lem}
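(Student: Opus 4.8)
\noindent\emph{Proof idea.} The plan is to treat the two implications separately; the substantive one is that multi-sensitivity of $(X,\f)$ forces it for $(\M,\W)$.

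The implication ``$(\M,\W)$ multi-sensitive $\Rightarrow$ $(X,\f)$ multi-sensitive'' is short. Let $\delta_0>0$ be a constant for $(\M,\W)$, let $U_1,\dots,U_r$ be nonempty open in $X$, fix $\eta\in(0,\delta_0)$, and set $\mathcal U_i=\{\mu\in\M:\mu(U_i)>1-\eta\}$. Each $\mathcal U_i$ is open (the map $\mu\mapsto\mu(U_i)$ is lower semicontinuous for the weak*-topology since $U_i$ is open) and nonempty ($\delta_x\in\mathcal U_i$ for $x\in U_i$). Multi-sensitivity of $(\M,\W)$ gives $n\in\bigcap_{i=1}^r N_{\W}(\mathcal U_i,\delta_0)$, so for each $i$ there are $\mu_i,\mu_i'\in\mathcal U_i$ with $\mathcal D(\widetilde f_1^{n}\mu_i,\widetilde f_1^{n}\mu_i')>\delta_0$; unwinding the Prohorov metric and using $\widetilde f_1^{n}\mu(B)=\mu(f_1^{-n}(B))$ produces a Borel set $B$ with
\[
\mu_i(f_1^{-n}(B))>\mu_i'\bigl(f_1^{-n}(N(B,\delta_0))\bigr)+\delta_0 .
\]
Since $\mu_i,\mu_i'$ assign mass $<\eta<\delta_0$ to $X\setminus U_i$, the left-hand side forces $f_1^{-n}(B)\cap U_i\neq\varnothing$ and the strict inequality forces $U_i\setminus f_1^{-n}(N(B,\delta_0))\neq\varnothing$; taking $u$ in the former and $v$ in the latter gives $f_1^n(u)\in B$ and $d(f_1^n(v),B)\ge\delta_0$, hence $d(f_1^n(u),f_1^n(v))\ge\delta_0$. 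Thus $n\in\bigcap_{i=1}^r N_{\f}(U_i,\delta_0/2)$, so $(X,\f)$ is multi-sensitive.

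For the converse, assume $(X,\f)$ is multi-sensitive with constant $\delta>0$ and let $\mathcal U_1,\dots,\mathcal U_r$ be nonempty open in $\M$. Using density of $\mathcal M_{\infty}(X)$, I would choose $\nu_i=\frac1{k_i}\sum_{j=1}^{k_i}\delta_{x_j^{(i)}}\in\mathcal U_i$ with $B_{\mathcal D}(\nu_i,\rho_i)\subseteq\mathcal U_i$ and a small $\varepsilon_i\in(0,\rho_i)$, then apply multi-sensitivity of $(X,\f)$ to the single collection $\{B_d(x_j^{(i)},\varepsilon_i):i\le r,\ j\le k_i\}$ to obtain one $n\in\N$ together with, for each $(i,j)$, points $a_j^{(i)},b_j^{(i)}\in B_d(x_j^{(i)},\varepsilon_i)$ with $d(f_1^n(a_j^{(i)}),f_1^n(b_j^{(i)}))>\delta$. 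Through the continuous surjection $\pi_k\colon X^k\to\mathcal M_k(X)$, $(y_1,\dots,y_k)\mapsto\frac1k\sum_j\delta_{y_j}$, which satisfies $\widetilde f_1^{n}\circ\pi_k=\pi_k\circ(f_1^n\times\cdots\times f_1^n)$, the tuples $\mathbf a^{(i)}=(a_j^{(i)})_j$, $\mathbf b^{(i)}=(b_j^{(i)})_j$ lie in $\pi_{k_i}^{-1}(\mathcal U_i)$, so $\mu_i:=\pi_{k_i}(\mathbf a^{(i)})$ and $\mu_i':=\pi_{k_i}(\mathbf b^{(i)})$ lie in $\mathcal U_i$, with $\widetilde f_1^{n}\mu_i=\frac1{k_i}\sum_j\delta_{f_1^n(a_j^{(i)})}$ and similarly for $\mu_i'$. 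It then remains to prove $\mathcal D(\widetilde f_1^{n}\mu_i,\widetilde f_1^{n}\mu_i')\ge\delta'$ for a $\delta'>0$ depending only on $\delta$ and $X$, after which $n\in\bigcap_i N_{\W}(\mathcal U_i,\delta')$ finishes the argument.

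That last step is the main obstacle. Coordinatewise separation of $\mathbf a^{(i)}$ and $\mathbf b^{(i)}$ does not by itself separate their empirical measures, since the Prohorov distance between two uniform $k$-atom measures behaves like a bottleneck-matching distance: the atoms $f_1^n(a_j^{(i)})$ may be matched, via a permutation, to the atoms $f_1^n(b_j^{(i)})$, so the two pushforwards can even coincide. Worse, $\delta'$ must be uniform over all collections, whereas the naive fixes --- moving a single atom of mass $1/k_i$, or relocating a mass $\varepsilon_i$ onto a new point --- only give a lower bound that decays with $k_i$ or $\varepsilon_i$. I would try to defeat both difficulties by making the two atom configurations unmatchable: keep all atoms of $\nu_i$ but one, and apply multi-sensitivity to a small ball around that one atom together with small balls around the remaining (fixed) atoms, arranging that the perturbed atom's $f_1^n$-image is pushed outside a $\delta$-neighbourhood of the images of the fixed atoms; then the complement of a suitable ball serves as the Borel set $B$ witnessing $\mathcal D(\widetilde f_1^{n}\mu_i,\widetilde f_1^{n}\mu_i')\ge c\delta$, once one also controls the mass the common part places in the relevant thin shell --- averaging over the radius of that ball should absorb the dependence on $k_i$. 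Establishing this combinatorial estimate on the Prohorov distance of finitely supported pushforwards, uniformly over the collection, is the crux, and I expect the cited proof to hinge on precisely such an estimate.
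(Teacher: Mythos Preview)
The paper does not actually prove this lemma: it is quoted as \cite[Theorem~4.3]{SD2019b} and used as a black box (for instance, in the theorem immediately following it and in the equivalence theorem on $[0,1]$). There is therefore no in-paper argument to compare your attempt against; the ``paper's proof'' is simply the citation.

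On the substance of your sketch: the implication ``$(\M,\W)$ multi-sensitive $\Rightarrow (X,\f)$ multi-sensitive'' is sound and parallels what the present paper does for the analogous implication for strong multi-sensitivity (argued there by contradiction via Dirac measures). For the converse you have correctly isolated the genuine difficulty---coordinatewise separation $d(f_1^n(a_j^{(i)}),f_1^n(b_j^{(i)}))>\delta$ does \emph{not} force a uniform lower bound on the Prohorov distance of the corresponding empirical measures, because of the permutation/matching phenomenon you describe---but your proposed fix does not close the gap. Multi-sensitivity gives, in each prescribed ball, two points whose images are $\delta$-apart \emph{from each other}; it does not let you ``arrange that the perturbed atom's $f_1^n$-image is pushed outside a $\delta$-neighbourhood of the images of the fixed atoms'', since you have no control over where those images land relative to anything else. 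The ``averaging over the radius'' suggestion is likewise too vague to manufacture a $\delta'$ independent of $k_i$. As written, the hard direction remains a sketch with a correctly named but unresolved obstacle; you should consult the original proof in \cite{SD2019b} for the missing step.
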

\begin{thm}
If $(X, \f)$ is strongly multi-sensitive, then $(\M, \W)$ is $\mathcal{N}$-sensitive.
\end{thm}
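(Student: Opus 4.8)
The plan is to prove $\mathcal N$-sensitivity of $(\M,\W)$ with a single constant $\delta'>0$ depending only on $(X,\f)$. I would fix a strong multi-sensitivity constant $\delta>0$ of $(X,\f)$ (so necessarily $\mathrm{diam}(X)>\delta$), cover the compact space $X$ by finitely many open balls $D_1,\dots,D_L$ of radius $\delta/8$ with centres $w_1,\dots,w_L$, and put $\delta':=\min\{\delta/5,\ 1/(L+1)\}$. Then, given $n\in\N$ and nonempty open $\mathcal U_1,\dots,\mathcal U_n\subseteq\M$, it suffices to produce one $k\in\N$ such that for each $i$ some $\mu_i,\nu_i\in\mathcal U_i$ satisfy $\mathcal D(\widetilde{f}_1^{ik}\mu_i,\widetilde{f}_1^{ik}\nu_i)>\delta'$. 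Using density of $\mathcal M_\infty(X)$, I would choose for each $i$ a finitely supported $\mu_i^0=\frac1{p_i}\sum_{j=1}^{p_i}\delta_{y_j^{(i)}}\in\mathcal U_i$ and $\rho_i\in(0,1]$ with $B_{\mathcal D}(\mu_i^0,\rho_i)\subseteq\mathcal U_i$, and set $\xi_i:=\rho_i/2$. The crucial design choice is to perturb \emph{every} atom of $\mu_i^0$ inside a ball of the \emph{fixed} radius $\xi_i$: since $\mathcal D\big(\frac1{p_i}\sum_j\delta_{u_j},\mu_i^0\big)\le\max_j d(u_j,y_j^{(i)})<\xi_i$ whenever $u_j\in B_d(y_j^{(i)},\xi_i)$, any such measure stays in $\mathcal U_i$ regardless of how large $p_i$ is.

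Next I would apply strong multi-sensitivity of $(X,\f)$ to the finite family $\{B_d(y_j^{(i)},\xi_i):1\le i\le n,\ 1\le j\le p_i\}$, taken with the vector that assigns entry $i$ to $B_d(y_j^{(i)},\xi_i)$. Since $(\f^{[i]})_1^k=f_1^{ik}$, this yields a single $k\in\N$ lying in $\bigcap_{i,j}N_{f_{1,\infty}^{[i]}}(B_d(y_j^{(i)},\xi_i),\delta)$, hence with $\mathrm{diam}\big(f_1^{ik}(B_d(y_j^{(i)},\xi_i))\big)>\delta$ for all $i,j$; write $C_j:=f_1^{ik}(B_d(y_j^{(i)},\xi_i))$.

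Then, working one $i$ at a time, I would use the pigeonhole principle to find $D_{a^*}$ meeting at least $p_i/L$ of the sets $C_1,\dots,C_{p_i}$; set $J=\{j:C_j\cap D_{a^*}\ne\varnothing\}$ and $w=w_{a^*}$. For $j\in J$ pick $a_j\in C_j\cap B_d(w,\delta/8)$ and, using $\mathrm{diam}(C_j)>\delta$, a point $b_j\in C_j$ with $d(a_j,b_j)>\delta/2$ (so $d(w,b_j)>3\delta/8$); for $j\notin J$ use $\mathrm{diam}(C_j)>\delta$ to get $C_j\not\subseteq B_d(w,\delta/2)$ and pick $a_j=b_j\in C_j$ with $d(w,a_j)>\delta/2$. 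Choosing preimages $u_j,v_j\in B_d(y_j^{(i)},\xi_i)$ of $a_j,b_j$ under $f_1^{ik}$ and setting $\mu_i=\frac1{p_i}\sum_j\delta_{u_j}$, $\nu_i=\frac1{p_i}\sum_j\delta_{v_j}$, the first paragraph gives $\mu_i,\nu_i\in\mathcal U_i$, while $\widetilde{f}_1^{ik}(\mu_i)=\frac1{p_i}\sum_j\delta_{a_j}$ and $\widetilde{f}_1^{ik}(\nu_i)=\frac1{p_i}\sum_j\delta_{b_j}$. With the test set $B=B_d(w,\delta/8)$ one has $a_j\in B$ precisely for $j\in J$, whereas $\delta'<\delta/4$ forces every $b_j$ out of $N(B,\delta')\subseteq B_d(w,\delta/8+\delta')$; hence $\widetilde{f}_1^{ik}(\mu_i)(B)=|J|/p_i\ge 1/L>\delta'=\widetilde{f}_1^{ik}(\nu_i)(N(B,\delta'))+\delta'$, so by the definition of $\mathcal D$ we get $\mathcal D(\widetilde{f}_1^{ik}(\mu_i),\widetilde{f}_1^{ik}(\nu_i))>\delta'$. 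Since this holds for all $i$ with the same $k$, $\bigcap_{i=1}^nN_{\widetilde{f}_{1,\infty}^{[i]}}(\mathcal U_i,\delta')\ne\varnothing$, which is the claim.

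The step that will need the most care is this last Prohorov estimate, where the real issue is to keep the separation uniform in $p_i$: separating a single atom yields distance of order $1/p_i$ only, and merely making $d(f_1^{ik}(u_j),f_1^{ik}(v_j))>\delta$ hold for every $j$ does not help, since the two empirical measures may coincide after permuting atoms. The construction above is engineered around both obstructions, by driving a fixed proportion of the $a_j$'s into one small ball while expelling \emph{all} the $b_j$'s from its $\delta'$-neighbourhood, which is exactly what a single test set needs. The remaining points are routine and I would only check them briefly: the estimate $\mathcal D\big(\frac1p\sum_j\delta_{u_j},\frac1p\sum_j\delta_{y_j}\big)\le\max_j d(u_j,y_j)$, the identity $\widetilde{f}_1^{ik}\big(\frac1p\sum_j\delta_{u_j}\big)=\frac1p\sum_j\delta_{f_1^{ik}(u_j)}$, and $(\widetilde{f}_{1,\infty}^{[i]})_1^k=\widetilde{f}_1^{ik}$.
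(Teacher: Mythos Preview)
Your argument is correct, and its core move is exactly the paper's: apply strong multi-sensitivity of $(X,\f)$ to a vector with repeated entries of the form $(1,\dots,1,2,\dots,2,\dots,n,\dots,n)$, where the repetitions are indexed by the atoms of finitely supported measures approximating the given $\mu_i^0\in\mathcal U_i$, and then lift the resulting separation to $\M$. The paper records just this observation and then points to the proof of Lemma~\ref{L3} (the equivalence of multi-sensitivity for $(X,\f)$ and $(\M,\W)$, proved in \cite{SD2019b}) for the remaining measure-theoretic estimate; you instead carry that estimate out explicitly and self-containedly via the covering/pigeonhole construction, which is a nice touch and makes the uniformity in $p_i$ transparent. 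So the approaches coincide at the strategic level; yours is simply the fully unpacked version of what the paper outsources to the cited lemma.
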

\begin{proof}
Since $(X, \f)$ is strongly multi-sensitive, therefore it is multi-sensitive with respect to the vector $\mathbf{v} =(\underbrace{1,\ldots, 1}_{k_1}, \underbrace{2,\ldots, 2}_{k_2}, \ldots, \underbrace{k, \ldots, k}_{k_s})$, for any $k$, $k_i$ and $s\in\N$, $1\leq i\leq s$. Thus, proceeding as in the proof of the converse of Lemma \ref{L3}, yields that $(\M, \W)$ is $\mathcal{N}$-sensitive.
\end{proof}
By Theorem \ref{2} and Lemma \ref{L3}, we get the following.
\begin{thm} Let $([0, 1], \f)$ be a periodic non-autonomous system. Then $([0, 1], \f)$ is strongly multi-sensitive (respectively, $\mathcal{N}$-sensitive) if and only if $(\mathcal{M}([0, 1]), \W)$ is strongly multi-sensitive (respectively, $\mathcal{N}$-sensitive).
\end{thm}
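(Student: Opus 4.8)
The plan is to prove the two biconditionals separately — one for ``strongly multi-sensitive'', one for ``$\mathcal{N}$-sensitive'' — by combining three things: Theorem~\ref{2}, which for a periodic system on $[0,1]$ makes sensitivity, multi-sensitivity, $\mathcal{N}$-sensitivity and strong multi-sensitivity coincide; Lemma~\ref{L3} together with its (converse) proof; and the two transfer theorems just established, namely that strong multi-sensitivity (resp.\ $\mathcal{N}$-sensitivity) of $(\M,\W)$ descends to $(X,\f)$, and that strong multi-sensitivity of $(X,\f)$ makes $(\M,\W)$ $\mathcal{N}$-sensitive.

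Three of the four implications are immediate. If $(\mathcal M([0,1]),\W)$ is strongly multi-sensitive (resp.\ $\mathcal{N}$-sensitive), then so is $([0,1],\f)$ by the descent theorem above — here neither periodicity nor the interval is used. For the forward direction of the $\mathcal{N}$-sensitive statement: if $([0,1],\f)$ is $\mathcal{N}$-sensitive, then, the system being periodic and on $[0,1]$, Theorem~\ref{2} upgrades this to strong multi-sensitivity of $([0,1],\f)$, and the second transfer theorem then yields that $(\mathcal M([0,1]),\W)$ is $\mathcal{N}$-sensitive.

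The implication that requires genuine work is ``$([0,1],\f)$ strongly multi-sensitive $\Rightarrow$ $(\mathcal M([0,1]),\W)$ strongly multi-sensitive''. One cannot simply argue ``$([0,1],\f)$ is multi-sensitive by Theorem~\ref{2}, hence $(\mathcal M([0,1]),\W)$ is multi-sensitive by Lemma~\ref{L3}'' and stop, since multi-sensitivity does not recover strong multi-sensitivity (as the unit-circle example earlier in this section shows). Instead I would transport the information vector by vector: strong multi-sensitivity of $([0,1],\f)$ supplies, for every $r\in\N$ and every $\mathbf v\in\N^r$ (repeated entries allowed), multi-sensitivity with respect to $\mathbf v$; pushing such a vector through the argument in the converse half of the proof of Lemma~\ref{L3} — exactly as was done for the ``staircase'' vectors $(\underbrace{1,\dots,1}_{k_1},\dots,\underbrace{k,\dots,k}_{k_s})$ in the proof that strong multi-sensitivity of $(X,\f)$ gives $\mathcal{N}$-sensitivity of $(\M,\W)$, only now for an arbitrary $\mathbf v$ — yields multi-sensitivity of $(\mathcal M([0,1]),\W)$ with respect to $\mathbf v$. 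Since $\mathbf v$ is arbitrary, $(\mathcal M([0,1]),\W)$ is strongly multi-sensitive. (If preferred, Lemma~\ref{1} first replaces both sides by the autonomous maps $g=f_k\circ\cdots\circ f_1$ and $\widetilde g=\widetilde f_k\circ\cdots\circ\widetilde f_1$, since $\W$ is $k$-periodic whenever $\f$ is; the estimate to run is the same.)

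The main obstacle is therefore the Prohorov-metric bookkeeping inside that vectorwise transfer: given open sets $\mathcal V_1,\dots,\mathcal V_r\subseteq\mathcal M([0,1])$, one picks empirical measures $\mu_j=\frac1{k_j}\sum_{\ell=1}^{k_j}\delta_{x_\ell^j}\in\mathcal V_j$ of possibly different sizes $k_j$ and must show that a separation of a single atom $x_\ell^j$ — obtained from multi-sensitivity of $([0,1],\f)$ with respect to the vector repeating $v_j$ exactly $k_j$ times — forces $\mathcal D\!\left(\widetilde f_1^{\,nv_j}(\mu_j),\widetilde f_1^{\,nv_j}(\mu_j')\right)>\delta$ for a fixed $\delta>0$, uniformly in $j$ and at a common time $n$. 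This is precisely the estimate already carried out in the proof of Lemma~\ref{L3} and of the preceding transfer theorems, now run against a general vector rather than the special ones, so the difficulty is care with the constants rather than a new idea; note also that this part of the argument does not actually use periodicity or the interval — those enter only through Theorem~\ref{2} in the $\mathcal{N}$-sensitive direction.
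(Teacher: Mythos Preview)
Your approach follows the paper's intended route — combine Theorem~\ref{2} with Lemma~\ref{L3} — but you are considerably more careful than the paper itself. The paper's entire argument is the single line ``By Theorem~\ref{2} and Lemma~\ref{L3}'', and you are right to flag that this does not settle the implication ``$([0,1],\f)$ strongly multi-sensitive $\Rightarrow$ $(\mathcal M([0,1]),\W)$ strongly multi-sensitive'': those two results together yield only that $(\mathcal M([0,1]),\W)$ is \emph{multi}-sensitive, and there is nothing in Theorem~\ref{2} (which concerns the interval, not $\mathcal M([0,1])$) or in the preceding transfer theorems that upgrades this to strong multi-sensitivity on the measure space. Your proposed remedy — rerun the converse half of the proof of Lemma~\ref{L3} against an arbitrary vector $\mathbf v$, feeding in strong multi-sensitivity of $([0,1],\f)$ with respect to the repeated vector $(\underbrace{v_1,\dots,v_1}_{k_1},\dots,\underbrace{v_r,\dots,v_r}_{k_r})$ — is sound and, as you observe, actually proves this direction for a general $(X,\f)$ without periodicity or the interval hypothesis; this is strictly more than the paper establishes. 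One small correction of wording: your phrase ``separation of a single atom'' undersells what the repeated-vector argument delivers — it separates \emph{all} $k_j$ atoms of $\mu_j$ simultaneously under $f_1^{\,nv_j}$, and that simultaneity is exactly what makes the Prohorov lower bound uniform in $k_j$; separating only one atom would give a bound of order $1/k_j$, which is not uniform.
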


\section{Multi-transitivity and $\mathcal{N}$-sensitivity in uniformly convergent non-autonomous systems}\label{S-4}
For a uniformly convergent non-autonomous system, we are interested in comparing the dynamical behavior of the non-autonomous system and the autonomous system induced by the uniform limit. So, if $\f$ converges to $f$ uniformly, then it is natural to ask that if $(X, f)$ is strongly multi-sensitive (respectively, $\mathcal{N}$-sensitive), then what about the non-autonomous system $(X, \f)$ and vice versa. The following examples show that a uniformly convergent non-autonomous system which is  strongly multi-sensitive  can converge to a limit function which is not sensitive and vice versa.
\begin{exm}\label{Ex2} Let $([0,2], \f)$ be the uniformly convergent  non-autonomous system, where 
\[ f_1(x) = f(x) =  \begin{cases}
2x & \text{if} \ x \in \left[0, \frac{1}{2}\right] \\
2(1-x) & \text{if} \ x \in \left[\frac{1}{2}, 1\right] \\
4(x-1)(2-x) & \text{if} \ x\in[1, 2], 
\end{cases}\]
\[ f_n(x) = g(x) = \begin{cases}
2x & \text{if} \ x \in \left[0, \frac{1}{2}\right] \\
2-2x & \text{if} \ x \in \left[\frac{1}{2}, 1\right] \\ 
x-1 & \text{if} \ x \in \left[1, 2\right],
\end{cases} \  (n\geq 2). \]
Then $\f$ converges uniformly to $g$ which is not sensitive as it is isometry on $[1, 2]$. Moreover, it can be verified that for any $x\in[0, 2]$, $f_1^n(x) = f^n(x)$, for any $n\in\N$ and $f$ is strongly multi-sensitive. Consequently, we get that the non-autonomous system $([0,2], \f)$ is also strongly multi-sensitive.
\end{exm}
\begin{exm}\label{Ex3} Let $([0, 1], \f)$ be the non-autonomous discrete system, where the members of $\f$ are given as follows:
\[ f_1(x) = \begin{cases}
x & \text{if} \ x \in \left[0, \frac{1}{2}\right] \\
\frac{1}{2} & \text{if} \ x \in \left[\frac{1}{2}, 1\right],
\end{cases} \ \ f_2(x) = \begin{cases}
\frac{1}{3} & \text{if} \ x \in \left[0, \frac{1}{2}\right] \\
\frac{2x}{3} & \text{if} \ x \in \left[\frac{1}{2}, 1\right], 
\end{cases}  \]
\[ f_n(x) = f(x) = \begin{cases}
1-2x & \text{if} \ x \in \left[0, \frac{1}{2}\right] \\
4x-2 & \text{if} \ x \in \left[\frac{1}{2}, \frac{3}{4}\right] \\ 
-2x+\frac{5}{2} & \text{if} \ x \in \left[\frac{3}{4}, 1\right]
\end{cases}  \ (n\geq 3). \]
Then $\f$ converges uniformly to the continuous map $f$ and $f$ is strongly multi-sensitive and hence $\mathcal{N}$-sensitive. But since  for any $0 \leq x \leq 1/2$, we have $f_1^{n}(x) = 1/3$, for every $n\in\N$, therefore the non-autonomous system $([0, 1], \f)$ cannot be sensitive and hence cannot be $\mathcal{N}$-sensitive or strongly multi-sensitive.
\end{exm}

Now, we give a sufficient condition under which strong multi-sensitivity (respectively, $\mathcal{N}$-sensitivity) of $(X, \f)$ implies and is implied by  that of $(X, f)$.

\begin{thm}\label{T2} Let $\f$ be a surjective  uniformly convergent sequence converging to $f$ such that $\{f_n^k\}_{k\in\N}$ converges collectively to $\{f^k\}_{k\in\N}$ and $\f$ is feebly open. Then the non-autonomous system $(X, \f)$ is  strongly multi-sensitive (respectively, $\mathcal{N}$-sensitive) if and only if $(X, f)$ is strongly multi-sensitive (respectively, $\mathcal{N}$-sensitive).
\end{thm}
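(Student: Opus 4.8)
The plan is to prove both implications through the same device: given a vector $\mathbf{v}=(v_1,\dots,v_r)$, compare the non-autonomous iterate $f_1^{nv_i}$ with the autonomous iterate $f^{mv_i}$ after inserting a fixed ``shift'' $p\in\N$, exploiting the elementary identity $f_1^{(m+p)v_i}=f_{pv_i+1}^{mv_i}\circ f_1^{pv_i}$. Because $\{f_n^k\}_{k\in\N}$ converges collectively to $\{f^k\}_{k\in\N}$, once $p$ is at least the threshold $N_0$ attached to a prescribed $\epsilon>0$ one has $D(f_{pv_i+1}^{mv_i},f^{mv_i})<\epsilon$ for all $i$ and all $m$, so on the orbit segments that matter $f_1^{(m+p)v_i}$ differs by less than $\epsilon$ from $f^{mv_i}\circ f_1^{pv_i}$. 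Feeble openness and surjectivity of $\f$ are exactly the hypotheses needed to move between an open set $U_i$ and its image under $f_1^{pv_i}$, in the two directions respectively.

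First I would prove that strong multi-sensitivity of $(X,f)$ forces strong multi-sensitivity of $(X,\f)$. Fixing $\mathbf{v}$ and nonempty open sets $U_1,\dots,U_r$, let $\delta$ be the strong-multi-sensitivity constant of $(X,f)$ for $\mathbf{v}$, put $\epsilon=\delta/4$, and choose $p\ge N_0$. Since each $f_n$ is feebly open, $W_i:=\mathrm{int}(f_1^{pv_i}(U_i))$ is nonempty and open. Strong multi-sensitivity of $(X,f)$ applied to $W_1,\dots,W_r$ with vector $\mathbf{v}$ yields a common $m\ge 1$ and points $a_i,b_i\in W_i$ with $d(f^{mv_i}(a_i),f^{mv_i}(b_i))>\delta$; choosing preimages $u_i',w_i'\in U_i$ with $f_1^{pv_i}(u_i')=a_i$ and $f_1^{pv_i}(w_i')=b_i$ and invoking the identity together with the estimate above gives $d(f_1^{(m+p)v_i}(u_i'),f_1^{(m+p)v_i}(w_i'))>\delta-2\epsilon=\delta/2$ for every $i$. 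Hence $m+p\in\bigcap_{i=1}^{r}N_{\f^{[v_i]}}(U_i,\delta/2)$, so $(X,\f)$ is multi-sensitive with respect to $\mathbf{v}$; since $\mathbf{v}$ was arbitrary, $(X,\f)$ is strongly multi-sensitive.

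For the converse I would fix a vector $\mathbf{u}=(u_1,\dots,u_r)$ and nonempty open sets $W_1,\dots,W_r$, let $\delta$ be the strong-multi-sensitivity constant of $(X,\f)$ for $\mathbf{u}$, put $\epsilon=\delta/4$, and take $p\ge N_0$. By surjectivity each $f_1^{pu_i}$ is onto, so some $x_i$ satisfies $f_1^{pu_i}(x_i)\in W_i$, and by continuity there is a nonempty open $U_i\ni x_i$ with $f_1^{pu_i}(U_i)\subseteq W_i$. The essential additional requirement --- and this is the crux --- is to shrink each $U_i$ below a common modulus of continuity of the \emph{finitely many} maps $f_1^{nu_i}$ with $1\le n\le p$, $1\le i\le r$, so that $\mathrm{diam}(f_1^{nu_i}(U_i))\le\delta$ whenever $n\le p$; with this in force, any common witness $n$ furnished by strong multi-sensitivity of $(X,\f)$ for $\mathbf{u}$ applied to $U_1,\dots,U_r$ is obliged to satisfy $n>p$. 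Then $f_1^{nu_i}=f_{pu_i+1}^{(n-p)u_i}\circ f_1^{pu_i}$ with $n-p\ge 1$, and the collective-convergence estimate carries the separation of $U_i$ under $f_1^{nu_i}$ over to a separation, under $f^{(n-p)u_i}$, of the $f_1^{pu_i}$-images of the witnessing points, which lie in $W_i$, with constant $\delta/2$. Hence $n-p\in\bigcap_{i=1}^{r}N_{f^{[u_i]}}(W_i,\delta/2)$, and $(X,f)$ is strongly multi-sensitive.

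The step I expect to be the main obstacle is precisely the one just sketched in the converse: nonemptiness of the intersection only yields \emph{some} separating time for $(X,\f)$, which a priori might be too small for collective convergence to be informative, so one has to engineer the open sets $U_i$ --- using compactness of $X$ and uniform continuity of finitely many fixed compositions --- to push every admissible witness past the shift $p$; and one has to fix the constants strictly in the order $\delta$, then $\epsilon$, then $p$, then the $U_i$, to keep this free of a circular dependence. Finally, the two $\mathcal{N}$-sensitivity equivalences follow from the very same arguments on restricting every vector in sight to one of the form $(1,2,\dots,n)$, a restriction that each of the steps above respects.
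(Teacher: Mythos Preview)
Your proposal is correct and follows essentially the same approach as the paper: both directions hinge on the identity $f_1^{(m+p)v_i}=f_{pv_i+1}^{mv_i}\circ f_1^{pv_i}$ together with collective convergence, with feeble openness used for $(X,f)\Rightarrow(X,\f)$ and surjectivity for $(X,\f)\Rightarrow(X,f)$. The only cosmetic difference is that the paper isolates the statement ``$\bigcap_{i=1}^{r} N_{f_{1,\infty}^{[v_i]}}(U_i,\delta)$ is infinite'' as a separate Claim and then applies it to preimage sets, whereas you fold the same continuity argument directly into the converse by shrinking the $U_i$ to force the witness past $p$; your packaging is arguably a bit cleaner, but the underlying mechanism is identical.
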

\begin{proof}
Let $(X, \f)$ be strongly multi-sensitive. We need to show that $(X, f)$ is strongly multi-sensitive. Let $r\in\N$ be arbitrary, $U_1$, $U_2$, \ldots, $U_r$ be any collection of nonempty open subsets of $X$ and $\mathbf{v} =(v_1, v_2, \ldots, v_r)$ be any vector in $\N^r$. Let $\delta>0$ be the constant of strong multi-sensitivity for $\f$ and choose an $m\in\mathbb{N}$ such that $(1/m)<(\delta/3)$. Now, the fact that $\{f_n^k\}_{k\in\mathbb{N}}$ converges collectively to $\{f^k\}_{k\in\mathbb{N}}$, implies that there exists a $t\in\N$ such that $D(f_s^k, f^k)<1/m$, for any $s\geq t$ and for any $k\in\N$. Since $f_1^{tv_{i}-1}(x)\in X$, for every $i\in\{1, 2, \ldots, r\}$, therefore we get that $d(f_{v_it}^{v_ik}(f_1^{v_it-1}(x)), f^{v_ik}(f_1^{v_it-1}(x)))<1/m$ and hence for each $i\in\{1, 2, \ldots, r\}$, we have \begin{equation}\label{4e4} d(f_1^{(t+k)v_i}(x), f^{v_ik}(f_1^{v_it-1}(x)))<1/m, \ \text{for each } x\in X \text{ and for each} \ k\in\N. \end{equation}
 \textit{Claim 1.} $\bigcap_{i=1}^{r} N_{f_{1,\infty}^{[v_i]}}(U_i, \delta)$ is an infinite set.
 
 \textit{Proof of Claim 1.} Assume the contrary and let $l=\max\{n\in\N: \text{ there exist } u_i, u_i'\in U_i \text{ such that } d(f_1^{nv_i}(u_i), f_1^{nv_i}(u_i'))>\delta$, for each $i\in\{1, 2, \ldots, r\}\}$. Let $\epsilon>0$  be such that $\epsilon\leq \delta/2$ and $u_i\in U_i$, for each $i\in\{1, 2, \ldots, r\}$. By continuity of $f_1^{v_ij}$, for $j\in\{1, 2, \ldots, l\}$, there exists an $\eta>0$ such that \[ d(u_i, u_i')<\eta \ \implies d(f_1^{v_ij}(u_i), f_1^{v_ij}(u_i'))<\epsilon, \text{ for every } u_i'\in X.\]
Note that $\bigcap_{i=1}^{r} N_{f_{1,\infty}^{[v_i]}}((U_i\cap B(u_i, \eta)), \delta)\ne\varnothing$. Now, since $f_1^{v_ij}(U_i\cap B(u_i, \eta))\subseteq B(f_1^{v_ij}(u_i)$, $\epsilon)$, for every $i\in\{1, 2, \ldots, r\}$, $j\in\{1, 2, \ldots, l\}$, therefore there exists a $p>l$ and $u_i$, $w_i\in U_i\cap B(u_i, \eta)$ such that $d(f_1^{pv_i}(u_i), f_1^{pv_i}(w_i))>\delta$, for every $i\in\{1, 2, \ldots, r\}$, which is a contradiction to the maximality of $l$.

 Then by Claim 1, we get that $\bigcap_{i=1}^tN_{\f^{v_i}}((f_1^{v_it-1})^{-1}(U_i), \delta)$ is infinite and hence  there exist $x_i'$, $y_i'\in (f_1^{v_it-1})^{-1}(U_i)$ and a $p\in\N$ satisfying $d(f_1^{(p+t)v_i-1}(x_i')$, $f_1^{(p+t)v_i-1}(y_i'))>\delta$, for each $i\in\{1, 2, \ldots, r\}$. Thus,  there exist $x_i$, $y_i\in U_i$ such that $x_i = f_1^{v_it-1}(x_i')$ and $y_i = f_1^{v_it-1}(y_i')$, for each $i\in\{1, 2, \ldots, r\}$ implying that $d(f_{v_it}^{v_ip}(x_i), f_{v_it}^{v_ip}(y_i))>\delta$. Consequently, \eqref{4e4} and triangle inequality, imply that $$d(f^{v_ip}(x_i), f^{v_ip}(y_i))>(\delta -(2/m))>\delta/3, \text{ for each } i\in\{1, 2, \ldots, r\}$$ and hence $\bigcap_{i=1}^rN_{f^{v_i}}(U_i, \delta/3)\ne\varnothing$. Therefore, $(X, f)$ is strongly multi-sensitive. 

Conversely, let $(X, f)$ be strongly multi-sensitive and let $r\in\N$ be arbitrary, $u_1$, $u_2$, \ldots, $u_r\in X$, $U_i =B(u_i, \epsilon)$ and $\mathbf{v} =(v_1, v_2, \ldots, v_r)$ be any vector in $\N^r$. Let $\delta>0$ be the constant of strong multi-sensitivity for $f$ as chosen above. Since  $\f$ is feebly open, therefore each of  $U_i' = \text{int}(f_1^{v_it-1}(U_i))$ is a nonempty open subset of $X$, where $t$ is from \eqref{4e4}. By strong multi-sensitivity of $(X, f)$, there exist an $n\in\N$ and $x_i'$, $y_i'\in U_i'$ such that $d(f^{nv_i}(x_i'), f^{nv_i}(y_i'))>\delta$, for each $i\in\{1, 2, \ldots, r\}$. Now, as $x_i'$, $y_i'\in U_i'$, there exist $x_i$, $y_i\in U_i$ such that $x_i'= f_1^{v_it-1}(x_i)$ and $y_i' = f_1^{v_it-1}(y_i)$, for each $i\in\{1, 2, \ldots, r\}$. Therefore,  $d(f^{nv_i}(f_1^{v_it-1}(x_i)), f^{nv_i}(f_1^{v_it-1}(y_i)))>\delta$, for each $i\in\{1, 2, \ldots, r\}$ and hence by using \eqref{4e4} and triangle inequality, we get that $d(f_1^{(t+n)v_i}(x_i), f_1^{(t+n)v_i}(y_i))>\delta/3$, for each $i\in\{1, 2, \ldots, r\}$ and hence $\bigcap_{i=1}^rN_{\f^{[v_i]}}(U_i, \delta/3)\ne\varnothing$. Thus, $(X, \f)$ is strongly multi-sensitive. Note that taking the vector $\mathbf{v} = (1, 2, \ldots, r)$, for any $r\in\N$ and proceeding as above we get that   the non-autonomous system $(X, \f)$ is  $\mathcal{N}$-sensitive if and only if $(X, f)$ is $\mathcal{N}$-sensitive.
\end{proof}
\begin{rmk}
\begin{enumerate}[(1)]
 \item Note that $\f$ in Example \ref{Ex2} is such that for any $n\geq 2$, $\{f^k_n\}_{k\in\N}$  converges collectively to $\{f^k\}_{k\in\N}$. But the family $\f$ is not surjective.
 \item Note that $\f$ in Example \ref{Ex3} is such that for any $n\geq 3$, $\{f^k_n\}_{k\in\N}$  converges collectively to $\{f^k\}_{k\in\N}$. But the family $\f$ is not feebly open.
 \item Note that using similar arguments Theorem \ref{T2} is also true for multi-sensitivity.
 \end{enumerate}
\end{rmk}

As an application of Theorem \ref{T2}, using \cite[Theorem 2.6]{MR3846218}, we get another sufficient condition under which sensitivity of $\f$ is equivalent to strong multi-sensitivity of $\f$ on the unit interval $[0, 1]$.
\begin{thm} Let $([0, 1], \f)$ be a uniformly convergent non-autonomous system with $f$ as the uniform limit such that $\{f_n^k\}_{k\in\N}$ converges collectively to $\{f^k\}_{k\in\N}$ and $\f$ is feebly open. Then the following are equivalent:
\begin{enumerate}[(1)]
\item $([0, 1], \f)$ is strongly multi-sensitive
\item $([0, 1], \f)$ is $\mathcal{N}$-sensitive
\item $([0, 1], \f)$ is multi-sensitive
\item $([0, 1], \f)$ is sensitive.
\end{enumerate}
\end{thm}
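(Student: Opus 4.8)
The plan is to prove the cycle $(1)\Rightarrow(3)\Rightarrow(4)\Rightarrow(1)$ together with the side implications $(1)\Rightarrow(2)\Rightarrow(4)$, which makes all four notions equivalent. Four of these arrows cost nothing: $(1)\Rightarrow(2)$ and $(2)\Rightarrow(4)$ are instances of the hierarchy ``strong multi-sensitivity $\Rightarrow$ $\mathcal{N}$-sensitivity $\Rightarrow$ sensitivity'' recorded just after Definition~3.1; $(1)\Rightarrow(3)$ is the observation that strong multi-sensitivity implies multi-sensitivity; and $(3)\Rightarrow(4)$ is immediate, since multi-sensitivity tested against a single open set is literally the definition of sensitivity. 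So the entire content is the implication $(4)\Rightarrow(1)$: boosting plain sensitivity of $([0,1],\f)$ to strong multi-sensitivity.

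For that, I would route through the autonomous limit map $f$. Under the standing hypotheses --- $\f$ converges uniformly to $f$, $\{f_n^k\}_{k\in\N}$ converges collectively to $\{f^k\}_{k\in\N}$, and $\f$ is feebly open --- \cite[Theorem~2.6]{MR3846218} asserts that $([0,1],\f)$ is sensitive if and only if $([0,1],f)$ is sensitive. Hence $(4)$ yields sensitivity of the autonomous system $([0,1],f)$. Now I would apply \cite[Corollary~3.2]{JWLL2018}: for a continuous self-map of $[0,1]$, sensitivity, multi-sensitivity, $\mathcal{N}$-sensitivity and strong multi-sensitivity all coincide, so $([0,1],f)$ is strongly multi-sensitive.

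The last step is to push this property back to the non-autonomous system via the ``only if'' part of Theorem~\ref{T2}, namely the implication ``$(X,f)$ strongly multi-sensitive $\Rightarrow$ $(X,\f)$ strongly multi-sensitive'' (and likewise for $\mathcal{N}$-sensitivity, taking $\mathbf{v}=(1,2,\dots,r)$). The small point that must be noted is that this direction of Theorem~\ref{T2} uses only uniform convergence, collective convergence of the iterates, and feeble openness of $\f$ --- precisely the hypotheses in force here --- and never invokes surjectivity of $\f$; surjectivity is needed only for the reverse implication of Theorem~\ref{T2}, to guarantee that the preimages $(f_1^{v_it-1})^{-1}(U_i)$ are nonempty. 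Granting this, $([0,1],\f)$ is strongly multi-sensitive, which is $(1)$, and the theorem follows.

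I do not foresee a genuine obstacle: the statement is an ``application'' assembled from \cite[Theorem~2.6]{MR3846218}, \cite[Corollary~3.2]{JWLL2018} and Theorem~\ref{T2}, and the only care required is the hypothesis bookkeeping just described --- checking that Theorem~\ref{T2} is being used in its surjectivity-free direction, and that \cite[Theorem~2.6]{MR3846218} is stated under the same assumptions on $\f$ as ours. If one instead wanted a self-contained proof, the real labour would be reconstructing \cite[Theorem~2.6]{MR3846218}, i.e.\ showing that uniform-plus-collective convergence of a feebly open family forces its sensitivity to agree with that of the limit map.
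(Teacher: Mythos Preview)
Your proposal is correct and matches the paper's own approach: the paper presents this theorem precisely as an application of Theorem~\ref{T2} together with \cite[Theorem~2.6]{MR3846218}, and the autonomous bridge via \cite[Corollary~3.2]{JWLL2018} is implicit in that. Your explicit bookkeeping on the surjectivity hypothesis --- noting that only the converse direction of Theorem~\ref{T2} is invoked, which relies on feeble openness rather than surjectivity --- is a useful clarification that the paper leaves unstated.
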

\begin{lem}\label{L1}(\cite[Lemma 2.5]{MR3779662}) Let $(X, d)$ be a metric space without isolated points and $\f$ converges uniformly to $f$. Then, 
\begin{enumerate}[(a)]
\item If $p$ is a periodic point for $(X, \f)$, it is a periodic point for $(X, f)$.
\item If there exists an infinite set of periodic points of $f$, then there exists a $\xi>0$ such that for any $x\in X$ there is a periodic point $p$ of $f$ such that $d(x, f^n(p))\geq \xi$, for all $n\geq 0$.
\end{enumerate}
\end{lem}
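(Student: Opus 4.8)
The plan is to prove the two items separately by elementary direct arguments, using only uniform convergence and continuity of $f$.

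For part (a), suppose $p$ is periodic for $(X,\f)$, so there is $N\in\N$ with $f_1^{nN}(p)=p$ for every $n\in\N$. Put $g_m = f_{mN}\circ f_{mN-1}\circ\cdots\circ f_{(m-1)N+1}$ for $m\geq 1$, so that $f_1^{mN}=g_m\circ g_{m-1}\circ\cdots\circ g_1$. A short induction on $m$ gives $g_m(p)=p$ for every $m$: indeed $g_1(p)=f_1^N(p)=p$, and if $f_1^{(m-1)N}(p)=p$ then $g_m(p)=g_m(f_1^{(m-1)N}(p))=f_1^{mN}(p)=p$. Next I would compare the finite orbit segment $z_i=f^i(p)$ ($0\leq i\leq N$) with the trajectory $y_0^m=p$, $y_i^m=f_{(m-1)N+i}(y_{i-1}^m)$, which satisfies $y_N^m=g_m(p)=p$. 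By induction on $i$ one obtains $d(y_i^m,z_i)\to 0$ as $m\to\infty$: the case $i=0$ is trivial, and in the inductive step $d(y_{i+1}^m,z_{i+1})\leq d(f_{(m-1)N+i+1}(y_i^m),f(y_i^m))+d(f(y_i^m),f(z_i))$, where the first term is at most $\sup_{x\in X}d(f_{(m-1)N+i+1}(x),f(x))\to 0$ by uniform convergence, and the second tends to $0$ because $f$ is continuous at $z_i$ (being a uniform limit of continuous maps) and $y_i^m\to z_i$. Taking $i=N$ gives $d(p,f^N(p))=d(y_N^m,z_N)\to 0$; since the left side does not depend on $m$, we conclude $f^N(p)=p$, i.e.\ $p$ is periodic for $(X,f)$.

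For part (b), I would first record the elementary facts that every periodic point $p$ of $f$ has a finite orbit $\{f^n(p):n\geq 0\}$, that every point of this orbit is again periodic with the same orbit, and hence that two periodic orbits of $f$ are either equal or disjoint; consequently, if the set of periodic points of $f$ is infinite, it decomposes into infinitely many pairwise disjoint finite orbits $\{O_\alpha\}_{\alpha\in A}$ with $A$ infinite. Then I argue by contradiction: if no $\xi$ as in the statement works, then for each $k\in\N$ there is $x_k\in X$ with $d(x_k,O_\alpha)<1/k$ for every $\alpha\in A$. For any two distinct indices $\alpha,\beta$, the triangle inequality gives $d(O_\alpha,O_\beta)\leq d(O_\alpha,x_k)+d(x_k,O_\beta)<2/k$ for all $k$, whence $d(O_\alpha,O_\beta)=0$; as $O_\alpha$ and $O_\beta$ are finite this forces $O_\alpha\cap O_\beta\neq\varnothing$, hence $O_\alpha=O_\beta$, contradicting that $A$ is infinite. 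So a suitable $\xi>0$ must exist.

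The only mildly delicate point is the error bookkeeping in part (a): the maps $f_{(m-1)N+i}$ of large index are each uniformly close to $f$, but one composes $N$ of them while the base point $y_i^m$ is itself moving, so the uniform estimate and the continuity of $f$ must be pushed through the composition one coordinate at a time. This is routine rather than a genuine obstacle. Part (b) is just a pigeonhole/triangle-inequality argument; as presented it uses neither compactness of $X$ nor the absence of isolated points, both of which are available in the present setting but not actually needed for these two assertions.
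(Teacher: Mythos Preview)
Your proof is correct. The paper does not supply its own argument for this lemma; it simply quotes the result from Miralles, Murillo-Arcila and Sanchis, so there is nothing in the paper to compare against beyond the citation itself.

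Both parts of your proposal are sound. In (a), the inductive error-propagation along the $N$-step block $y_0^m,\ldots,y_N^m$ is exactly the right mechanism: uniform convergence controls $d(f_{(m-1)N+i+1}(y_i^m),f(y_i^m))$ uniformly in the argument, and continuity of $f$ (automatic for a uniform limit of continuous maps) handles the moving base point. In (b), your contradiction is clean: the negation forces every pair of distinct periodic orbits to have distance zero, which is impossible for disjoint finite sets; and infinitely many periodic points do yield infinitely many (hence at least two) distinct orbits, since each orbit is finite. Your closing observation that neither compactness nor the absence of isolated points is actually used in (a) or (b) is also accurate; that hypothesis enters the paper only where this lemma is later applied.
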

\begin{lem}(\cite[Lemma 2.1]{MR3019971})\label{L4} If $f_n$ converges to $f$ uniformly on $X$, then $f_n^k$ converges uniformly to $f^k$, for every $k\in\N$.
\end{lem}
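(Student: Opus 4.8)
The plan is to argue by induction on $k$. For $k=1$ there is nothing to prove, since $f_n^1=f_n$ and $f^1=f$, so uniform convergence of $f_n^1$ to $f^1$ is exactly the hypothesis.

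For the inductive step, assume $f_n^k\to f^k$ uniformly, and recall that in the non-autonomous notation $f_n^{k+1}=f_{n+k}\circ f_n^k$ while $f^{k+1}=f\circ f^k$. Fixing $x\in X$, inserting the intermediate point $f(f_n^k(x))$ and applying the triangle inequality gives
\[d\bigl(f_n^{k+1}(x),f^{k+1}(x)\bigr)\le d\bigl(f_{n+k}(f_n^k(x)),f(f_n^k(x))\bigr)+d\bigl(f(f_n^k(x)),f(f^k(x))\bigr).\]
The first term on the right is at most $\sup_{y\in X}d(f_{n+k}(y),f(y))$, which tends to $0$ as $n\to\infty$ since $f_{n+k}\to f$ uniformly (shifting the index by the fixed constant $k$ is harmless). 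For the second term I would invoke compactness of $X$, hence uniform continuity of $f$: given $\epsilon>0$, pick $\eta>0$ with $d(a,b)<\eta\Rightarrow d(f(a),f(b))<\epsilon$, then use the inductive hypothesis to find $N$ with $\sup_x d(f_n^k(x),f^k(x))<\eta$ for all $n\ge N$, which pushes the second term below $\epsilon$ for all such $n$. Taking the supremum over $x\in X$ and letting $n\to\infty$ gives $\sup_x d(f_n^{k+1}(x),f^{k+1}(x))\to 0$, closing the induction.

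As for the main difficulty: there is none of real substance. The only non-formal ingredient is uniform continuity of the limit map $f$, which is automatic because the phase space is a compact metric space, and the argument is just the standard fact that uniform convergence is preserved under post-composition with a continuous map, iterated $k$ times; the one thing to keep track of is the bookkeeping of the non-autonomous composition indices $f_n^k=f_{n+k-1}\circ\cdots\circ f_n$, which is why the estimate attaches the new factor $f_{n+k}$ on the outside at each step rather than the inside.
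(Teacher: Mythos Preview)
Your argument is correct and is the standard proof of this fact. Note, however, that the paper does not supply its own proof of this lemma at all: it is quoted verbatim from \cite[Lemma~2.1]{MR3019971} and used as a black box, so there is no in-paper argument to compare against. Your inductive splitting via the triangle inequality, together with uniform continuity of $f$ on the compact space $X$, is exactly how the cited reference proves it, and your handling of the non-autonomous indexing $f_n^{k+1}=f_{n+k}\circ f_n^k$ is accurate.
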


\begin{lem}(\cite[Lemma 3.1]{SD2020})\label{L2} If ($X, \f)$ is multi-transitive, then  the set $\{l\in\N : f_1^{jl}(U_j)\cap V_j\ne\varnothing$, for each $1\leq j\leq m\}$, for any collection of nonempty open sets $U_1$, $U_2$, \ldots, $U_m$; $V_1$, $V_2$, \ldots, $V_m$, is infinite.
\end{lem}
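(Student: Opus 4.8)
The statement only asserts that a certain set is infinite, so the plan is to prove the cleaner fact that it is \emph{unbounded}. Write $N$ for the set $\{l\in\N : f_1^{jl}(U_j)\cap V_j\ne\varnothing \text{ for every } 1\le j\le m\}$. Applying multi-transitivity directly to the collection $U_1,\dots,U_m;V_1,\dots,V_m$ only yields $N\ne\varnothing$, and the naive attempt to bootstrap this — shrinking each $U_j$ to a small ball on which $f_1^{jl}$ is almost constant and re-applying multi-transitivity — does not help, since nothing forces the resulting hitting time to exceed the previous one. The fix is to feed multi-transitivity a \emph{longer} list of open sets and pad the unused slots with the whole space $X$.

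Fix $q\in\N$; I will produce an element of $N$ that is at least $q$. Define, for $i\in\{1,\dots,qm\}$, nonempty open sets $W_i,W_i'\subseteq X$ by setting $W_{qj}:=U_j$ and $W_{qj}':=V_j$ for $1\le j\le m$, and $W_i:=W_i':=X$ whenever $q\nmid i$. Since $(X,\f)$ is multi-transitive, the definition of multi-transitivity, used with $qm$ in place of $m$, gives a single $k\in\N$ with $f_1^{ik}(W_i)\cap W_i'\ne\varnothing$ for every $i\in\{1,\dots,qm\}$. For $q\nmid i$ the condition is vacuous (it reads $f_1^{ik}(X)\cap X\ne\varnothing$); for $i=qj$ it reads $f_1^{j(qk)}(U_j)\cap V_j\ne\varnothing$. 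Hence $l:=qk$ lies in $N$, and since $k\ge 1$ we get $l\ge q$.

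As $q\in\N$ was arbitrary, $N$ contains arbitrarily large integers, so it is infinite, which is the claim. The only thing that needs care is the index bookkeeping: one must check that $X$ itself is an admissible nonempty open set so that the $qm$-fold instance of multi-transitivity is legitimate, and that the nontrivial constraints sit exactly at the indices $i=qj$ and carry the exponent $j(qk)$, so that the \emph{single} number $l=qk$ simultaneously witnesses all $m$ intersection conditions. Beyond this there is no real obstacle; in particular, no assumption on $X$ (such as perfectness or absence of isolated points) is required, the argument being purely combinatorial once multi-transitivity is available.
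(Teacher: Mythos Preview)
Your argument is correct: padding the list of open sets with copies of $X$ at the indices not divisible by $q$ and applying the $qm$-fold instance of multi-transitivity yields a hitting time $l=qk\ge q$ in $N$, and since $q$ was arbitrary $N$ is unbounded, hence infinite. The bookkeeping is right (the nontrivial constraints sit exactly at $i=qj$ and carry exponent $j(qk)$), and no extra hypotheses on $X$ are needed.

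Note that the present paper does not supply its own proof of this lemma; it is quoted from \cite[Lemma~3.1]{SD2020}. Your padding trick is the natural and standard way to obtain the result, and is presumably the argument used in the cited reference.
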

\begin{thm}\label{MS} Let $(X, d)$ be a metric space without isolated points and $\f$ converge uniformly to $f$. If $(X, \f)$ is multi-transitive and the set of periodic points of $\f$ is dense in $X$, then $(X, \f)$ is $\mathcal{N}$-sensitive. 
\end{thm}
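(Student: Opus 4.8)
The plan is to mimic the classical argument (Jiao et al.\ \cite{JWLL2018}, and its non-autonomous analogues for sensitivity in \cite{MR3779662}) that multi-transitivity together with a dense set of periodic points forces a strong form of sensitivity, but now carrying the ``vector'' $\mathbf{v}=(1,2,\ldots,n)$ all the way through. First I would fix the sensitivity constant. Since $\f$ converges uniformly to $f$ and $(X,\f)$ has a dense (hence, by no isolated points, infinite) set of periodic points, Lemma \ref{L1}(a) gives that these are periodic points of $f$, and $f$ has an infinite set of periodic points; then Lemma \ref{L1}(b) supplies a $\xi>0$ such that for every $x\in X$ there is a periodic point $p$ of $f$ with $d(x,f^{n}(p))\geq\xi$ for all $n\geq 0$. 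I would take $\delta=\xi/4$ (or a similar small multiple) as the candidate $\mathcal{N}$-sensitivity constant; note this $\delta$ is uniform in $n$, which is exactly what $\mathcal{N}$-sensitivity requires.

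Next, fix $n\in\N$ and nonempty open sets $U_1,\ldots,U_n$; the goal is to find a single $k$ lying in $\bigcap_{i=1}^{n} N_{f_{1,\infty}^{[i]}}(U_i,\delta)$. Inside each $U_i$ I would pick a periodic point $q_i$ of $\f$ (density), which by Lemma \ref{L1}(a) is also periodic for $f$; shrink $U_i$ to a small ball $B(q_i,\eta_i)\subseteq U_i$. Using Lemma \ref{L1}(b), for the point $q_i$ choose a periodic point $p_i$ of $f$ with $d(q_i, f^{m}(p_i))\geq\xi$ for all $m\geq 0$; by density of the periodic points of $\f$ (again via Lemma \ref{L1}(a), and shrinking if needed), I may assume each $p_i$ lies in a small open set $W_i$ and is periodic for $\f$ too. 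Now apply multi-transitivity, in the refined form of Lemma \ref{L2}: the set of $l\in\N$ with $f_1^{\,i\,l}(B(q_i,\eta_i))\cap W_i'\neq\varnothing$ for all $1\le i\le n$ (where $W_i'$ is a small ball around $p_i$ whose forward $f$-orbit stays $\xi$-far from $q_i$) is infinite. Pick such an $l=k$ large enough that the uniform-convergence estimate (Lemma \ref{L4}: $f_1^{\,ik}$ is close to $f^{ik}$, or rather the tail maps are — this needs the ``collective/tail'' comparison as in the proof of Theorem \ref{T2}) lets me replace $f_1^{ik}$-orbits by $f^{ik}$-orbits up to an error $<\xi/4$. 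Then in each coordinate $i$: there is a point $u_i\in U_i$ with $f_1^{\,ik}(u_i)$ within $\xi/4$ of (an $f$-image of) $p_i$, while $q_i\in U_i$ has $f_1^{\,ik}(q_i)$ close to a point on the $f$-orbit of $q_i$ — and these two targets are $\xi$ apart. The triangle inequality then yields $d(f_1^{\,ik}(u_i), f_1^{\,ik}(q_i))>\xi/2>\delta$ for every $i$, i.e.\ $k\in\bigcap_{i=1}^{n} N_{f_{1,\infty}^{[i]}}(U_i,\delta)$, as required.

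The main obstacle, and the place the argument needs real care, is the interface between the ``eventual'' dynamics $(X,f)$ and the ``initial-segment'' dynamics $(X,\f)$: Lemma \ref{L1}(b) is a statement about $f$, multi-transitivity and Lemma \ref{L2} are statements about $\f$, and the periodic points of $\f$ are periodic for $f$ but with no control on the first few maps $f_1,\ldots,f_N$. One must therefore argue entirely with the forward maps $f_1^{\,ik}$ (not pretending the system is a single transformation), choose $k$ large so that the tail $f_{N+1},f_{N+2},\ldots$ — which is uniformly close to $f$ by Lemma \ref{L4} — governs the relevant iterates, and absorb the finitely many exceptional front maps by continuity (shrinking $\eta_i$). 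A secondary subtlety is that a priori one only knows $N_{f_{1,\infty}^{[i]}}(U_i,\delta)\ne\varnothing$ \emph{coordinate by coordinate}; it is precisely Lemma \ref{L2} (the ``infiniteness'' upgrade of multi-transitivity) that lets one synchronize all $n$ coordinates at one common $k$ while still having room to push $k$ past the threshold from the uniform-convergence estimate. Once these are handled, the displayed triangle-inequality bookkeeping is routine, and letting $n$ range over $\N$ with the \emph{same} $\delta=\xi/4$ gives $\mathcal{N}$-sensitivity.
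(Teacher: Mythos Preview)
Your overall architecture matches the paper's: fix the constant $\xi$ via Lemma~\ref{L1}, pick periodic points $q_i\in U_i$ and ``far'' $f$-periodic points $p_i$, use Lemma~\ref{L2} to hit neighbourhoods of the $p_i$ simultaneously, and finish with a triangle-inequality computation. But there is a genuine gap at the step where you control $f_1^{\,ik}(q_i)$.

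You write that ``$f_1^{\,ik}(q_i)$ is close to a point on the $f$-orbit of $q_i$'' and justify this by ``the collective/tail comparison as in the proof of Theorem~\ref{T2}''. That comparison, however, is available only under the \emph{collective convergence} hypothesis of Theorem~\ref{T2}; Theorem~\ref{MS} assumes merely uniform convergence. Lemma~\ref{L4} says that for each \emph{fixed} $k$ one has $f_N^{k}\to f^{k}$ uniformly as $N\to\infty$; it gives no approximation of $f_1^{\,ik}$ by $f^{\,ik}$ when $ik$ itself is growing. Likewise, ``absorbing the front maps by continuity'' handles $f_1^{N}$ but not the remaining $ik-N$ compositions of maps that are each close to $f$: closeness of factors does not propagate to closeness of long compositions without exactly the collective-convergence assumption you are implicitly invoking. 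So as written, neither your estimate for $f_1^{\,ik}(q_i)$ nor the one for $f_1^{\,ik}(u_i)$ (beyond the raw containment $f_1^{\,ik}(u_i)\in W_i'$) is justified.

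The paper sidesteps this entirely. It never approximates $f_1^{\,ik}$ by $f^{\,ik}$. Instead it lets $n=\mathrm{lcm}(n_1,\ldots,n_r)$ be the lcm of the $\f$-periods of the $q_i$, writes the multi-transitivity time as $s=(N-1)n+l$ with $0\le l<n$, and then advances each coordinate from time $is$ to time $inN$. At time $inN$ one has $f_1^{\,inN}(q_i)=q_i$ \emph{exactly} (periodicity under $\f$), so no approximation is needed for the $q_i$-branch. For the $y_i$-branch, the advance from $is$ to $inN$ is only $(n-l)i\le nr$ steps, a number bounded independently of $N$; Lemma~\ref{L4} then legitimately gives $f_{is+1}^{(n-l)i}\approx f^{(n-l)i}$ for $N$ large, and uniform continuity of the finitely many maps $f^{(n-l)i}$ keeps $f_1^{\,inN}(y_i)$ near $f^{(n-l)i}(p_i)$. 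Your outline would be repaired by inserting this ``bounded push to the next multiple of $n$'' in place of the collective-convergence step; note in particular that Lemma~\ref{L2} by itself does \emph{not} let you choose the transitivity time $k$ to already be a multiple of $n$, which is why the extra push is needed.
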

\begin{proof}
Let $u_1$, $u_2$, \ldots, $u_r\in X$ and $\epsilon>0$ be given. Since the set of periodic points of $\f$ is dense and hence infinite, therefore Lemma \ref{L1} implies that corresponding to each $i\in\{1, 2, \ldots, r\}$ there exists a periodic point $p_i\in X$ such that \begin{equation}\label{2E1}
d(u_i, f^m(p_i))\geq \xi, \text{ for every } m\geq 0 \text{ and for each } i\in\{1, 2, \ldots, r\} .
\end{equation} Moreover, using density of periodic points of $\f$ in $X$, we get a  collection of periodic points $q_1$, $q_2$, \ldots, $q_r\in X$ satisfying $d(u_i, q_i)<\eta$, where $\eta =\min\{\epsilon, \xi/4\}$, for each $i\in\{1, 2, \ldots, r\}$. If $n_i$ is the period of $q_i$, then $f_1^{mn_{i}}(q_i) = q_{i}$, for every $m\in\N$ and for each $i\in\{1, 2, \ldots, r\}$. Let $n=lcm\{n_1,n_2,\ldots, n_r\}$. By uniform  continuity of $f^{il}$, for $l\in\{0, 1, 2, \ldots, n\}$ and $i\in\{1, 2, \ldots, r\}$, there exists an open  neighborhood $V_i$ of $p_i$ corresponding to each $i$, such that  \begin{equation}\label{2E2}
d(f^{il}(p_i), f^{il}(x_i))<{\xi}/{8}, \text{ for every } x_i\in V_i, \ i\in\{1, 2, \ldots, r\}.
\end{equation}  Let $U_i=B_d(u_i, \epsilon)$ and $V_i$ be the neighborhood of $p_i$, for each $i\in\{1, 2, \ldots, r\}$.  Now, by Lemma \ref{L4}, $f_{i((N-1)n+l)+1}^{(n-l)i}$ converges  uniformly to $f^{(n-l)i}$ as $N\to\infty$, for any $0\leq l<n$, $i\in\{1, 2, \ldots, r\}$, therefore there exists an $N_0\in \N$ such that for all $N\geq N_0$, we have \begin{equation}\label{2E3} d\left(f_{i((N-1)n+l)+1}^{(n-l)i}(v_i), f^{(n-l)i}(v_i)\right)<{\xi}/{8},\text{ for every } v_i\in V_i, \ i\in\{1, 2, \ldots, r\}.\end{equation}
By multi-transitivity of $(X, \f)$ there exists a $k\in\N$ such that $f_1^{ik}(U_i)\cap V_i\ne\varnothing$, for each $1\leq i\leq r$. Using Lemma \ref{L2}, we get that the set $\{k\in\N : f_1^{ik}(U_i)\cap V_i\ne\varnothing$, for each $1\leq i\leq r\}$ is an infinite set, therefore we can choose $s= (N-1)n+l$, $0\leq l<n$ such that $N\geq N_0$ and $f_1^{is}(U_i)\cap V_i\ne\varnothing$,  for every $i\in\{1, 2, \ldots, r\}$. Consequently, there exists a $y_i\in U_i$ such that $f_1^{is}(y_i)\in V_i$, for each $i\in\{1, 2, \ldots, r\}$. Hence, using \eqref{2E3}, we get that $$d\left(f_{i((N-1)n+l)+1}^{(n-l)i}\left(f_1^{i((N-1)n+l)}\right)(y_i), f^{(n-l)i}\left(f_1^{i((N-1)n+l)}\right)(y_i)\right)<{\xi}/{8},$$ which implies that $d(f_1^{iNn}(y_i), f^{(n-l)i}(f_1^{is}(y_i))<{\xi}/{8}$ for each $i\in\{1, 2, \ldots, r\}$.  This together with \eqref{2E2} and triangle inequality, yields that $d(f^{(n-l)i}(p_i), f_1^{inN}(y_i))<\xi/4$, for each $i\in\{1, 2, \ldots, r\}$. Finally using \eqref{2E1}, for each $i\in\{1, 2, \ldots, r\}$, we get that \begin{align*}   d(u_i, f^{(n-l)i}(p_i)) & \leq  d(f^{(n-l)i}(p_i), f_1^{inN}(y_i)) + d(q_i, f_1^{inN}(y_i))+ d(q_i, u_i) \\
\implies d(f_1^{inN}(q_i), f_1^{inN}(y_i)) & \geq d(u_i, f^{(n-l)i}(p_i))- d(f^{(n-l)i}(p_i), f_1^{inN}(y_i))-d(q_i, u_i) \\ & >\xi-{\xi}/{4}-{\xi}/{4} = {\xi}/{2}.
\end{align*} 
So either $d(f_1^{inN}(u_i), f_1^{inN}(q_i)) >\xi/4$ or $d(f_1^{inN}(u_i), f_1^{inN}(y_i)) >\xi/4$, for each $i\in\{1, 2, \ldots, r\}$. Note that $u_i$, $y_i\in U_i$, for each $i\in\{1, 2, \ldots, r\}$. Therefore, for $\delta=\xi/4$, we get that $\bigcap_{i=1}^r N_{\f^{[i]}}(U_i, \delta)\ne\varnothing$ and hence $(X, \f)$ is $\mathcal{N}$-sensitive.
\end{proof} 
\begin{rmk} Theorem \ref{MS} generalizes the main result of \cite{JWLL2018} to a uniformly convergent non-autonomous discrete dynamical system.
\end{rmk}
{

\end{document}